\pgfplotsset{surface/.style={ %
		view={130}{20},
               axis z line=middle,%
               axis x line=middle,%
               axis y line=middle,%
               xmax=1.2,%
               ymax=1.2,%
               clip=false,
               xtick=\empty,
               ytick=\empty,
               extra x ticks={1},%
               extra y ticks={1},%
               extra x tick labels={$T=1$},%
               zmajorticks=true,
               colormap/jet}
}
\pgfplotsset{erreurs/.style={legend cell align=left,
    legend pos=outer north east,
    legend plot pos=left,
    legend style={cells={anchor=west},draw=none},
    xlabel=$h$,
    xmin=0.001,
    xmax=0.03}
    }
\tikzset{pente/.style={opacity=0.6}}
\pgfplotsset{shadow/.style={black,mark=square,mark size=3.0,mark options={solid,fill=none}}}
\pgfplotsset{fast/.style={black,mark=triangle*,mark size=3.0,mark options={solid,fill=none}}}
\pgfplotsset{cout/.style={purple,mark=diamond*,mark size=3.2,mark options={fill=purple}}}
\pgfplotsset{cible/.style={blue,mark=square*,mark size=2.5,mark options={fill=blue}}}
\pgfplotsset{cibleyT/.style={black,mark=*,mark size=2.5,mark options={fill=gray}}}
\pgfplotsset{CG/.style={black,mark=otimes*,mark size=2.5,mark options={fill=gray}}}
\pgfplotsset{solex/.style={black,mark=*,mark size=2.5,mark options={fill=gray}}}
\pgfplotsset{energie/.style={red,mark=*,mark size=2.5,mark options={fill=red}}}
\newtheorem{theorem}{Theorem}[section]
\newtheorem{lemma}[theorem]{Lemma}
\newtheorem{proposition}[theorem]{Proposition}
\numberwithin{equation}{section}
\numberwithin{figure}{section}
\numberwithin{theorem}{section}
\newcommand{\norm}[2]{\left\|#1\right\|_{#2}}
\newcommand{\RR}{\mathbb{R}} 
\newcommand\dis{\displaystyle}
\newcommand\inter[1]{\llbracket #1\rrbracket}
\def\Xint#1{\mathchoice
{\XXint\displaystyle\textstyle{#1}}%
{\XXint\textstyle\scriptstyle{#1}}%
{\XXint\scriptstyle\scriptscriptstyle{#1}}%
{\XXint\scriptscriptstyle\scriptscriptstyle{#1}}%
\!\int}
\def\XXint#1#2#3{{\setbox0=\hbox{$#1{#2#3}{\int}$ }
\vcenter{\hbox{$#2#3$ }}\kern-.6\wd0}}
\def\fint{\Xint-}
\title[Optimal control non-local parabolic problems]{Optimal control of linear non-local parabolic problems with an integral kernel}
\author{Umberto Biccari\textsuperscript{\,$\ast$}}  
\address{\textsuperscript{$\ast$}\, [1] Chair of Computational Mathematics, Fundaci\'on Deusto, Avenida de las Universidades 24, 48007 Bilbao, Basque Country, Spain} 
\address{[2]\,Facultad de Ingenier\'ia, Universidad de Deusto, Avenida de las Universidades 24, 48007 Bilbao, Basque Country, Spain.}
\email{umberto.biccari@deusto.es, u.biccari@gmail.com}
\thanks{This project has received funding from the European Research Council (ERC) under the European Union’s Horizon 2020 research and innovation programme (grant agreement NO: 694126-DyCon). The work of the U.B. is partially supported by the Air Force Office of Scientific Research (AFOSR) under Award NO: FA9550-18-1-0242, by Grant MTM2017-92996-C2-1-R COSNET of MINECO (Spain) and by the Elkartek grant KK-2020/00091 CONVADP of the Basque government. The work of V. H.-S. is partially supported by the program ``Estancias posdoctorales por M\'exico'' of CONACyT, Mexico. A.~O. would like to thank all members of the Chair of Computational Mathematics at Fundaci\'on Deusto, Bilbao, Spain for their kind hospitality during his visit, which was useful for developing part of this paper.} 
\author{V. Hern\'andez-Santamar\'ia\textsuperscript{\,$\dagger$}}  
\address{\textsuperscript{$\dagger$}\,Instituto de Matem\'aticas, Universidad Nacional Aut\'onoma de M\'exico, Circuito Exterior, C.U., C.P. 04510 CDMX, Mexico}
\email{victor.santamaria@im.unam.mx}
\author{L. Louison\textsuperscript{\,$\ddagger$}}
\address{\textsuperscript{$\ddagger$}\, Universit{\'e} de Guyane, UMR 42 EcoFoG, Route de Montabo BP 165, 97323 Cayenne ({\sc French Guiana})}
\email{loic.louison@univ-guyane.fr}      
\author{A. Omrane\textsuperscript{\,$\ddagger$\,$\bullet$}}
\address{\textsuperscript{$\ddagger$}\, Universit{\'e} de Guyane, UMR 42 EcoFoG, Route de Montabo BP 165, 97323 Cayenne ({\sc French Guiana})}
\address{\textsuperscript{$\bullet$}\, Universit\'e de Bordeaux, IMB, 351 Cours de la Lib\'eration, 33400 Talence (France)}
\email{abdennebi.omrane@univ-guyane.fr}
\keywords{Optimal control, Heat equation, Non-local terms, Intergral kernel}
\subjclass[2020]{35K05, 49J20, 49K20, 49N30, 93C41.}
\begin{document}

\begin{abstract} 
We consider a linear non-local heat equation in a bounded domain $\Omega\subset\mathbb{R}^d$, $d\geq 1$, with Dirichlet boundary conditions, where the non-locality is given by the presence of an integral kernel. Motivated by several applications in biological systems, in the present paper we study some optimal control problems from a theoretical and numerical point of view. In particular, we will employ the classical low-regret approach of J.-L. Lions for treating the problem of incomplete data and provide a simple computational implementation of the method. The effectiveness of the results are illustrated by several examples. 
\end{abstract}

\maketitle 

\section{Introduction}\label{intro_sec} 

Non-local models are used for the description of several complex phenomena for which a local approach is inappropriate or limiting. In the past, they have been successfully employed in different fields, such as material sciences (\cite{bates2006some}), dislocation dynamics in crystals (\cite{dipierro2015dislocation}), image processing (\cite{gilboa2008nonlocal}) or elasticity (\cite{silling2000reformulation}).

Non-local terms appear also in many diffusion models in biology, where they describe biological systems on scales that are convenient to observation and data collection. A classical example is the chemotaxis phenomena, which can be modeled by non-local differential  equations of diffusion type. For instance, in \cite{burger2006keller} a non-local Keller-Segel model is used to study the chemotaxis of a cell population in which the chemicals are produced by the cells themselves. Furthermore, one can find numerous examples of other non-local diffusion phenomena in the book \cite{okubo2013diffusion}, where they are presented several systems involving, for instance, random walks and random population diffusion. 

Motivated also by the aforementioned applications, these types of problems have recently raised the interest of the mathematical community. In particular, we may refer to the monograph \cite{andreu2010nonlocal} for an extended mathematical survey containing different examples of non-local diffusion models. 

In this paper, we consider a diffusion spatially non-local equation, where the non-locality is given by the presence of an integral kernel. In particular, we are interested in the analysis of control properties. 

Control Theory deals with dynamical systems that can be controlled, i.e. whose evolution can be influenced by some external agent. In this framework, a first problem one could face is the one of \textit{controllability}, that is, to study the possibility of steering the system from one configuration to another by employing one or more controls applied through actuators. If controllability to a given final state is granted, one can try to reach this state by minimizing
some cost, thus defining an \textit{optimal control} problem. The mathematical theory of optimal control has rapidly developed in the last decades into an important and separate field of applied mathematics. Actually, applications of optimal control of parabolic equations span a large spectrum of fields including aviation and space technology, robotics, movement sequences in sports, and the control of chemical processes.

For the non-local diffusion models that we are going to consider in the present work, which will be introduced in the next section, controllability has been already treated in some recent paper (see, for instance, \cite{biccari2017null,fernandez2016null,lissy2018internal,micu2017local}). On the other hand, to the best of our knowledge, the analysis of optimal control problems was never addressed in this setting. Then, the main purpose of our work will be to give an insight on some classical optimal control question which arises naturally from the biological applications behind the mentioned models. In this framework, we will focus our attention on two different situations:
\begin{enumerate}
	\item In a first moment, we will deal with a classical optimal control problem, in which we are interested in minimizing a given cost functional coming from the controllability theory. This cost functional will typically characterize the optimal control of minimal energy which allows to reach some predetermined configuration.
	\item Secondly, we will consider the case of missing or incomplete initial data. This is a very typical situation when studying certain natural phenomena, for instance in ecology or genetics, where the employment of models with no missing data may lead to a misunderstanding of the underneath processes. 
\end{enumerate} 

This second problem will be addressed by using the concepts of \textit{low-regret} or \textit{least-regret} controls, which have been introduced by J.-L. Lions in \cite{lions1992controle}, and may be well-adapted to the non-local problem we are considering. 

These techniques were designed precisely for dealing with the optimal control of models where missing data are incorporated. They consist of transferring the optimal control problem with missing data to a classical one, relaxing the problem of control by a sequence of low-regret controls. This idea was firstly applied to linear models (\cite{gabay1994decisions,lions1999duality}), and it was later extended to the nonlinear setting in \cite{nakoulima2002no}. Afterwards, many other authors applied these ideas to other control problems for PDEs with incomplete data. The interested reader may refer, for instance, to \cite{dorville2004low,jacob2010optimal,louison_1,mahoui2017pointwise}. In this work, we will see how the low-regret and least-regret approaches can be applied also in the the non-local context.

The present paper is organized as follows: in Section \ref{sec:2}, we present the problem we are going to consider. In Section \ref{sec:3}, we address some elementary optimal control question associated with our model, while in Section \ref{sec:4}, we address the issue of missing initial data. Finally, Section \ref{sec:5} is devoted to some numerical experiment. 

\section{Problem formulation and existing controllability results}\label{sec:2} 

Let $\Omega$ be a bounded domain of $\RR^d$, $d\geq 1$, with boundary $\partial\Omega$ of class $\mathcal{C}^2$. Given a time horizon $T>0$, we set $Q:=\Omega\times (0,T)$ and $\Sigma:=\partial\Omega\times(0,T)$. Let $K(x,\theta,t)\in L^\infty(\Omega\times\Omega\times(0,T))$, and  consider the following linear non-local parabolic problem
\begin{align}\label{direct_pb_y}
	\begin{cases}
		\dis\frac{\partial y}{\partial t}-\Delta y+ \int_\Omega K(x,\theta,t)\,y(\theta,t)\,d\theta = v{\bf 1}_{\mathcal{O}}, & (x,t)\in Q,
		\\
		y = 0, &(x,t)\in \Sigma,
		\\
		y(0,x) = y_0(x), & x\in\Omega.
	\end{cases}
\end{align}

In \eqref{direct_pb_y}, $y:=y(x,t)$ is the state function and $v:=v(x,t)$ is the control function. The latter acts on the system through the non-empty set $\mathcal{O}\in\Omega$. Here, ${\bf 1}_{\mathcal{O}}$ denotes the characteristic function of $\mathcal{O}$. 

Moreover, we assume that $y_0\in L^2(\Omega)$ and $v\in \mathcal{U}:=L^2(\mathcal{O}\times(0,T))$, so that the system \eqref{direct_pb_y} admits a unique solution
\begin{align*}
	y\in L^2((0,T); H^1_0(\Omega))\cap H^1(0,T; H^{-1}(\Omega)),
\end{align*}
which satisfies classical energy estimates. Actually, this remains true also if $v{\bf 1}_{\mathcal{O}}$ is replaced by a more general right-hand side $f\in L^2(0,T; H^{-1}(\Omega))$.

Non-local diffusion equations in the form \eqref{direct_pb_y} were introduced as an extension of already existing advection-diffusion-reaction models of multi-species ecosystems. They take into account the fact that, in many biological situations, local movement is also coupled with long-range influences, such as the combination of clonal growth and a dispersing phase like seeds. One example of a model for this situation was developed by Furter and Grinfeld (\cite{furter1989local}), who examined diffusion-reaction models of single-species dynamics that incorporate a reaction term dependent on characteristics of the population as a whole.

Concerning the controllability of \eqref{direct_pb_y}, this issue has been firstly studied in \cite{fernandez2016null}, where the authors proved an interior null controllability result by imposing analyticity assumptions on the kernel in order to obtain unique continuation properties. In this framework, also coupled systems have been treated in \cite{lissy2018internal}. Moreover, in \cite{micu2017local}, analogous results have been obtained for a one-dimensional equation with a kernel in separated variables, by means of classical spectral analysis techniques. Later on, these mentioned results have been extended in \cite{biccari2017null}, where the null-controllability is proved under weaker assumptions on the kernel and for both the linear and the semilinear case, by using a Carleman approach. In particular, the authors there proved the following result.

\begin{theorem}[{\cite[Theorem 1.1]{biccari2017null}}]\label{theo1_bhs}
Let $T>0$ and assume that $K\in L^\infty(\Omega\times\Omega\times(0,T))$ satisfies 
\begin{align}\label{K_est_weak}
	\mathcal{K}:=\sup_{(x,t)\in\overline{Q}}\exp\left(\frac{\mathcal{A}}{t(T-t)}\right)\int_{\Omega} |K(x,\theta,t)|\,d\theta <+\infty,
\end{align}
for a given positive constant $\mathcal A$ only depending on $\Omega$. Then, for any $y_0\in L^2(\Omega)$, there exists a control function $v\in \mathcal U$ such that the associated solution $y$ of \eqref{direct_pb_y} satisfies $y(x,T) = 0$.	
\end{theorem}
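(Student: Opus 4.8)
The plan is to use the classical duality between null-controllability and observability: I would first show that the theorem is equivalent to an observability inequality for the adjoint system, and then establish that inequality through a global Carleman estimate whose weights are matched to the decay built into \eqref{K_est_weak}. Since the nonlocal operator $y\mapsto\int_\Omega K(x,\theta,t)y(\theta,t)\,d\theta$ has $L^2(\Omega)$-adjoint $\varphi\mapsto\int_\Omega K(\theta,x,t)\varphi(\theta,t)\,d\theta$, the relevant backward problem is
\begin{align*}
\begin{cases}
-\partial_t\varphi-\Delta\varphi+\displaystyle\int_\Omega K(\theta,x,t)\,\varphi(\theta,t)\,d\theta=0, & (x,t)\in Q,\\
\varphi=0, & (x,t)\in\Sigma,\\
\varphi(x,T)=\varphi_T(x), & x\in\Omega.
\end{cases}
\end{align*}
By the Hilbert Uniqueness Method it suffices to prove that there is a constant $C>0$, independent of the datum $\varphi_T\in L^2(\Omega)$, such that
\begin{equation*}
\|\varphi(\cdot,0)\|_{L^2(\Omega)}^2\le C\int_0^T\!\!\int_{\mathcal O}|\varphi(x,t)|^2\,dx\,dt.
\end{equation*}
Indeed, this inequality is exactly the coercivity of the strictly convex functional $J(\varphi_T)=\tfrac12\int_0^T\!\int_{\mathcal O}|\varphi|^2\,dx\,dt+\int_\Omega y_0\,\varphi(\cdot,0)\,dx$ on $L^2(\Omega)$; its minimizer $\widehat\varphi_T$ produces, via the Euler--Lagrange equation, a control $v=\widehat\varphi\,\mathbf 1_{\mathcal O}$ driving the solution of \eqref{direct_pb_y} to $y(\cdot,T)=0$.

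To prove the observability inequality I would freeze the nonlocal term as a right-hand side. Writing $f(x,t):=\int_\Omega K(\theta,x,t)\varphi(\theta,t)\,d\theta$, the function $\varphi$ solves the heat equation $-\partial_t\varphi-\Delta\varphi=-f$ with homogeneous Dirichlet data, so the standard global Carleman estimate of Fursikov--Imanuvilov applies. With the usual weights $\alpha(x,t)$ and $\xi(x,t)$, both blowing up like $(t(T-t))^{-1}$ as $t\to0,T$, it yields, for all $\lambda$ and $s$ large enough,
\begin{equation*}
s^3\lambda^4\int_0^T\!\!\int_\Omega e^{-2s\alpha}\xi^3|\varphi|^2\,dx\,dt\le C\left(\int_0^T\!\!\int_\Omega e^{-2s\alpha}|f|^2\,dx\,dt+s^3\lambda^4\int_0^T\!\!\int_{\mathcal O}e^{-2s\alpha}\xi^3|\varphi|^2\,dx\,dt\right).
\end{equation*}

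The decisive step --- and the one I expect to be the main obstacle --- is to absorb the source term $\int_0^T\!\int_\Omega e^{-2s\alpha}|f|^2$ into the left-hand side, the difficulty being that $f$ at the point $x$ involves $\varphi$ at the integration point $\theta$, so the Carleman weights at $x$ and at $\theta$ do not match. Cauchy--Schwarz in $\theta$ together with Fubini gives the pointwise-in-time bound
\begin{equation*}
\int_\Omega|f(x,t)|^2\,dx\le\|K\|_{L^\infty}|\Omega|\int_\Omega\left(\int_\Omega|K(\theta,x,t)|\,dx\right)|\varphi(\theta,t)|^2\,d\theta\le\|K\|_{L^\infty}|\Omega|\,\mathcal K\,e^{-\mathcal A/(t(T-t))}\int_\Omega|\varphi(\theta,t)|^2\,d\theta,
\end{equation*}
where the last inequality is precisely the content of assumption \eqref{K_est_weak}. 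The point is that the exponential factor $e^{-\mathcal A/(t(T-t))}$ supplied by the kernel is tailored to compensate both the blow-up of the Carleman weights at $t=0,T$ and the spatial mismatch between $x$ and $\theta$: choosing the constant $\mathcal A$ in accordance with the weight $\psi$ fixed by $\Omega$, and then $\lambda$ and $s$ sufficiently large, the weighted source integral is dominated by the term $s^3\lambda^4\int\!\int e^{-2s\alpha}\xi^3|\varphi|^2$ and can be moved to the left.

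Once the nonlocal term is absorbed, the remaining arguments are routine. From the resulting Carleman inequality I would remove the weights on a sub-interval $[T/4,3T/4]$, where $e^{-2s\alpha}\xi^3$ is bounded above and below by positive constants, to obtain $\int_{T/4}^{3T/4}\!\int_\Omega|\varphi|^2\le C\int_0^T\!\int_{\mathcal O}|\varphi|^2$. A standard dissipation (energy) estimate for the backward adjoint equation, using that the nonlocal perturbation is bounded, then transfers this interior bound to a bound on $\|\varphi(\cdot,0)\|_{L^2(\Omega)}$, yielding the observability inequality and hence, through the duality argument above, the existence of the null control asserted in the theorem.
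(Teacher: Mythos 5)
This theorem is quoted in the paper without proof, being imported verbatim from \cite[Theorem 1.1]{biccari2017null}, whose argument the paper itself describes as ``a Carleman approach.'' Your sketch --- HUM duality reducing null controllability to an observability inequality for the adjoint, a Fursikov--Imanuvilov Carleman estimate with the non-local term frozen as a source, and absorption of that source using the exponential decay encoded in \eqref{K_est_weak} to dominate both the blow-up at $t=0,T$ and the $x$-versus-$\theta$ mismatch of the weights --- is exactly that strategy, so it agrees with the cited proof in both outline and in the identification of the decisive step.
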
	

We shall stress that, according to the hypothesis \eqref{K_est_weak}, to get a positive controllability result the kernel $K$, as a function of $t$, should behave like 
\begin{align*}
	K(\cdot,\cdot,t) \sim \exp\left(-\frac{\mathcal B}{t(T-t)}\right),
\end{align*}
i.e. it should decay exponentially as $t$ goes to $0$ and $T$. 

This is however a quite strong restriction on the admissible kernels. Actually, only rapidly decaying or compactly supported (in time) kernels verify \eqref{K_est_weak}.

In some specific situations, this assumption it is actually not necessary. For instance, in the particular case when $K=|\Omega|^{-1}\textnormal{cst.}$, assumption \eqref{K_est_weak} has been recently removed in \cite{hernandez2020local}. Using the so-called shadow model (see \cite{HR00,MCHKS18} or \cite{HSZ20} for other applications in control), the authors have proved that the system 
\begin{equation}\label{eq:nonlocal_nonlinear}
	\frac{\partial y}{\partial t}-\Delta y + a y + b\,  \fint_{\Omega} y(\theta,t)d\theta = v \mathbf{1}_{\mathcal O}
\end{equation}
where $a,b\in\mathbb R$ and 
\begin{align*}
	\fint_\Omega y(\theta,t)d\theta:=|\Omega|^{-1}\int_{\Omega}y(\theta,t)d\theta,
\end{align*}
is null-controllable at time $T>0$. Obviously, since the kernel $K$ is constant, the controllability result in Theorem \ref{theo1_bhs} does not apply directly. We also emphasize that this result extends the one in \cite{fernandez2016null} since the constant kernels do not satisfy the analyticity assumptions considered in that work. 

While the controllability of \eqref{direct_pb_y} has been already studied in the aforementioned references, to the best of our knowledge the optimal control for this same problem is yet to be addressed. This is exactly the purpose of the present paper, in which this issue will be treated from two different viewpoints. We will firstly address a classical optimal control problem, in which we are interested in minimizing a given functional characterizing the control $v$ employed in \eqref{direct_pb_y}. In more detail, we consider the strictly convex cost functional $J:\mathcal U\to\RR$ defined by
\begin{equation}\label{quadratic_cost}
	J(v)={\beta}\norm{y(v)-\bar{z}}{L^2(Q)}^2 +\mu\norm{v}{\mathcal{U}}^2, \qquad \beta,\,\mu>0,
\end{equation}
where we denote by $y=y(v)$ the dependence of the state in terms of the control function $v$, and we will study the optimal control problem 
\begin{equation}\label{min_J_part1}
	\inf_{v\in\mathcal{U}} J(v).
\end{equation}

To this end, we will firstly prove the existence of a minimizer and, in a second moment, we will characterize it through a suitable optimality system.

In \eqref{quadratic_cost}, $\bar{z}\in L^2(\Omega\times(0,T))$ is the target we aim to reach and $\mathcal U$ is a given set of admissible controls. Here we will consider the simple case in which $\mathcal U = L^2(\mathcal O\times(0,T))$, although other different choices are possible. Actually, in many practical situations realistic controls depend on the physical process that the equation is describing, and the set of admissible controls may be characterized by taking into account several constraints.


\section{Optimal control}\label{sec:3}

As we mentioned in Section \ref{intro_sec}, the mathematical theory of optimal control is nowadays very rich in the context of local equations, covering for instance linear and non-linear models, convex and non-convex functionals, and problems with control constraints. On the other hand, way less results are available in the framework of non-local models. Actually, to the best of our knowledge, the optimal control problem for \eqref{direct_pb_y} has never been addressed before. 

In this section, we will study the optimal control problem \eqref{min_J_part1}. In more detail, we will firstly prove the existence of a minimizer for the functional $J$ and, in a second moment, we will give a characterization through the optimality system. Moreover, by means of classical techniques in control theory, in what follows we will need to rely on the so-called \textit{adjoint equation}, which is given by 
\begin{align}\label{adjoint_pb_p}
	\begin{cases}
		\displaystyle -\frac{\partial p}{\partial t}-\Delta p+\int_\Omega K(\theta,x,t)\,p(\theta,t)\,d\theta = y(v)-\bar{z}, & (x,t)\in Q,
		\\
		p = 0, & (x,t)\in \Sigma,
		\\
		p(x,T) = 0, & x\in\Omega.
	\end{cases}
\end{align}

\subsection{Existence of an optimal control}

We have the following.
\begin{proposition}\label{exist_prop_sect1} 
There exists an optimal control function $u\in\mathcal{U}$, unique solution to the minimization problem \eqref{quadratic_cost}, \eqref{min_J_part1}.
\end{proposition}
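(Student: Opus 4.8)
The plan is to apply the direct method of the calculus of variations, exploiting the fact that \eqref{direct_pb_y} is linear, so that the control-to-state map is affine and continuous. First I would record that, by linearity of \eqref{direct_pb_y}, the solution decomposes as $y(v)=\bar y+Sv$, where $\bar y$ is the solution corresponding to $v\equiv 0$ and $S\colon\mathcal U\to L^2(Q)$ is the bounded linear operator sending a control (with zero initial datum) to the state it produces. Boundedness of $S$ is exactly the energy estimate quoted after \eqref{direct_pb_y}, which continues to hold in the non-local setting because $K\in L^\infty$, so that the integral term is an admissible lower-order perturbation that can be absorbed into a right-hand side in $L^2(0,T;H^{-1}(\Omega))$. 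Consequently $J$ is a continuous quadratic functional on the Hilbert space $\mathcal U$.

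Next I would verify the hypotheses of the direct method. For convexity, $v\mapsto\norm{y(v)-\bar z}{L^2(Q)}^2$ is convex, being the composition of a convex function with an affine map, while $v\mapsto\mu\norm{v}{\mathcal U}^2$ is strictly convex since $\mu>0$; hence $J$ is strictly convex. For coercivity, from $J(v)\geq\mu\norm{v}{\mathcal U}^2$ one reads off that $J(v)\to+\infty$ as $\norm{v}{\mathcal U}\to+\infty$, so in particular $J$ is bounded below and $d:=\inf_{v\in\mathcal U}J(v)$ is finite.

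Then I would take a minimizing sequence $(v_n)\subset\mathcal U$ with $J(v_n)\to d$. Coercivity forces $(v_n)$ to be bounded in $\mathcal U$, so, $\mathcal U$ being a Hilbert space and hence reflexive, up to a subsequence $v_n\rightharpoonup u$ weakly in $\mathcal U$. The decisive point is that $S$, being bounded and linear, is weak-to-weak continuous, whence $y(v_n)=\bar y+Sv_n\rightharpoonup\bar y+Su=y(u)$ weakly in $L^2(Q)$; combined with the weak lower semicontinuity of the $L^2$- and $\mathcal U$-norms this yields
\begin{equation*}
	J(u)\leq\liminf_{n\to\infty}J(v_n)=d,
\end{equation*}
so that $J(u)=d$ and $u$ is a minimizer. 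Finally, uniqueness follows at once from the strict convexity established above: two distinct minimizers would, upon evaluating $J$ at their midpoint, produce a strictly smaller value, a contradiction.

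I do not expect a genuine obstacle here, since this is the standard Lions-type existence argument; the only point requiring mild care is the weak continuity of the control-to-state map, namely that $v_n\rightharpoonup u$ implies $y(v_n)\rightharpoonup y(u)$. This rests entirely on the linearity of \eqref{direct_pb_y} together with the uniform energy bound for its solutions, both of which survive the presence of the non-local integral term precisely because that term is bounded (thanks to $K\in L^\infty$) and does not disrupt the underlying linear structure.
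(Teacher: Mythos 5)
Your proof is correct and follows essentially the same route as the paper, which also invokes the Direct Method of the Calculus of Variations with lower semicontinuity, strict convexity (from the $\mu\norm{v}{\mathcal U}^2$ term), and coercivity via $J(v)\geq\mu\norm{v}{\mathcal U}^2$. Your write-up is in fact more complete than the paper's, since you explicitly justify the weak lower semicontinuity through the weak-to-weak continuity of the bounded affine control-to-state map, a point the paper leaves implicit.
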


\begin{proof}
The result is a consequence of the so-called \textit{Direct Method of the Calculus of Variations} (see \cite{brezis2010functional}), which ensures the existence and uniqueness of a minimizer provided the functional $J$ satisfies the following assumptions:
\begin{itemize}
	\item[1.] $J$ is lower semi-continuous.
	\item[2.] $J$ is strictly convex, i.e. $J((1-\lambda)v+\lambda w) < (1-\lambda)J(v)+\lambda J(w)$ for all $\lambda\in (0,1)$ and $v,w\in\mathcal U$.
	\item[3.] $J$ is coercive, i.e. $\lim_{\norm{v}{\mathcal U}\to+\infty}J(v) = +\infty$.	
\end{itemize}

The first two properties are evident since the functional is quadratic. Concerning the third one, it is enough to see that $J(v)\geq \mu\norm{v}{\mathcal U}$. This ends the proof.  
 \end{proof}

\subsection{Characterization of the optimal control}

Notice that Proposition \ref{exist_prop_sect1} guarantees the existence of a unique solution to our optimal control problem \eqref{quadratic_cost}-\eqref{min_J_part1}, but it does not allow to characterize it. Notwithstanding, in practical applications it is important also an explicit knowledge of the control, which is typically computed through the so-called \textit{optimality system}. In order to introduce this system, we will firstly need the following technical lemma.

\begin{lemma} 
Let $v\in\mathcal U$ be the optimal control minimizing the functional $J$. For all $w \in \mathcal{U}$, we have the identity
\begin{equation}\label{euler_lagrange_part1}
	{\beta}\langle(y(v)-\bar{z}),y(w)\rangle_{L^2(Q)} + \mu\langle v,w\rangle_{\mathcal{U}} = 0.
\end{equation}
\end{lemma}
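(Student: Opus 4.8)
The plan is to derive the Euler--Lagrange identity \eqref{euler_lagrange_part1} as a first-order optimality condition. Since $v$ minimizes the functional $J$ over the linear space $\mathcal U$, the directional (G\^{a}teaux) derivative of $J$ at $v$ in every admissible direction $w\in\mathcal U$ must vanish. Concretely, I would fix an arbitrary $w\in\mathcal U$ and consider the scalar function $\lambda\mapsto J(v+\lambda w)$ for $\lambda\in\RR$, which is a quadratic polynomial in $\lambda$ because $J$ is quadratic. The optimality of $v$ gives $\left.\tfrac{d}{d\lambda}J(v+\lambda w)\right|_{\lambda=0}=0$, and unwinding this derivative is exactly what produces \eqref{euler_lagrange_part1}.

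First I would record the linearity of the control-to-state map. Because the equation \eqref{direct_pb_y} is linear in $(v,y_0)$ with homogeneous boundary data, the solution operator $v\mapsto y(v)$ splits as $y(v)=\widehat y+S(v)$, where $\widehat y$ solves \eqref{direct_pb_y} with $v\equiv 0$ (carrying the initial datum $y_0$) and $S$ is the bounded linear operator sending $w$ to the solution of \eqref{direct_pb_y} with zero initial datum and control $w$. In particular $y(v+\lambda w)=y(v)+\lambda\,y(w)$, where here $y(w):=S(w)$ denotes the state generated by $w$ with null initial data. This is the structural fact that lets me differentiate cleanly.

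Next I would substitute this into \eqref{quadratic_cost}. Writing $y(v+\lambda w)-\bar z=(y(v)-\bar z)+\lambda\,y(w)$, expanding the two squared $L^2$-norms, and collecting the terms linear in $\lambda$ gives
\begin{equation*}
	\left.\frac{d}{d\lambda}J(v+\lambda w)\right|_{\lambda=0}
	=2\beta\,\langle y(v)-\bar z,\,y(w)\rangle_{L^2(Q)}+2\mu\,\langle v,w\rangle_{\mathcal U}.
\end{equation*}
Setting this to zero and dividing by $2$ yields precisely \eqref{euler_lagrange_part1}. I would justify interchanging differentiation with the norm via the bilinearity of the inner products and the boundedness of $S$, so no regularity subtleties arise.

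I do not expect a serious obstacle here: the statement is the standard stationarity condition for a differentiable convex functional on a Hilbert space, and the only point requiring a word of care is the identification $y(v+\lambda w)-y(v)=\lambda\,y(w)$, i.e. that the perturbation of the state is itself the state associated with the control $w$ and \emph{zero} initial datum. This follows immediately from the linearity of \eqref{direct_pb_y} and the fact that the perturbation $\lambda w$ enters only through the right-hand side. The mild notational overload of $y(w)$ (meaning $S(w)$ rather than the full affine state) should be flagged so that \eqref{euler_lagrange_part1} is read correctly; this identity is exactly the stepping stone needed to introduce the adjoint state $p$ from \eqref{adjoint_pb_p} and rewrite the optimality condition in the usual form $\mu v + p|_{\mathcal O\times(0,T)}=0$.
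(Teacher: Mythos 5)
Your proposal is correct and follows essentially the same route as the paper: expand $J(v+\lambda w)-J(v)$ using the linearity $y(v+\lambda w)=y(v)+\lambda y(w)$ and pass to the limit $\lambda\to 0$ in the difference quotient. Your explicit remark that $y(w)$ must be read as the state generated by $w$ with \emph{zero} initial datum is a useful clarification that the paper leaves implicit, but it does not change the argument.
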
 

\begin{proof}
We use the necessary condition of Euler-Lagrange satisfied by the optimal control $v\in \mathcal{U}$. Thanks to the linearity of the problem with respect to the control function, for every $\lambda>0$ and $w\in \mathcal{U}$ we have $y(v+\lambda w)=y(v)+\lambda y(w)$. Then,
\begin{align*}
	J(v+\lambda w)-J(v) &= {\beta}\norm{y(v+\lambda w)-\bar{z}}{L^2(Q)}^2 + \mu\norm{v+\lambda w}{\mathcal{U}}^2 - {\beta}\norm{y(v)-\bar{z}}{L^2(Q)}^2 + \mu\norm{v}{\mathcal{U}}^2
	\\
	&={\beta\lambda^2}\norm{y(w)}{L^2(Q)}^2 + 2\beta\lambda\langle y(v)-\bar{z},y(w)\rangle_{L^2(Q)} + \mu\lambda^2\norm{w}{\mathcal{U}}^2 + 2\mu\lambda\langle v,w\rangle_{\mathcal{U}}.
\end{align*}
Hence, the necessary condition of Euler-Lagrange gives
\begin{align*}
	\lim_{\lambda\rightarrow 0} \left(\frac{J(v+\lambda w)-J(v)}{\lambda}\right) ={\beta} \langle y(v)-\bar{z}, y(w)\rangle_{L^2(Q)} + \mu\langle v,w\rangle_{\mathcal{U}} = 0, \; \mbox{ for all } w\in\mathcal{U}.
\end{align*}
\end{proof}

\noindent We can now give the following characterization of the optimal control.

\begin{proposition}\label{prop_characterization_part1}
The optimal control $v$ is characterized by the triplet 
\begin{align*}
	(v,y,p)\in \mathcal{U}\times L^2(Q)\times L^2(Q),
\end{align*}
unique solution to the optimality system
\begin{equation}\label{sos_part1}
	\begin{cases}
		\displaystyle \frac{\partial y}{\partial t}-\Delta y+\int_\Omega K(x,\theta,t)\,y(\theta,t)\,d\theta = v{\bf 1}_{\mathcal{O}}, & (x,t)\in Q,
		\\[9pt]
		\displaystyle -\frac{\partial p}{\partial t}-\Delta p+\int_\Omega K(\theta,x,t)\,p(\theta,t)\,d\theta = \beta(y(v)-\bar{z}), & (x,t)\in Q,
		\\[9pt]
		y=p=0, & (x,t)\in \Sigma,
		\\[9pt]
		y(x,0)=y_0(x), \quad p(x,T) = 0, & x\in\Omega,
	\end{cases}
\end{equation}
with
\begin{equation}\label{adjoint_state_eq_part1}
	 p+\mu v=0, \qquad \mbox{in}\quad \mathcal{O}\times (0,T).
\end{equation}
\end{proposition}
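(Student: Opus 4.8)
The plan is to connect the Euler--Lagrange identity \eqref{euler_lagrange_part1} to the adjoint state $p$ solving the backward equation in \eqref{sos_part1}, thereby rewriting the state-dependent term $\beta\langle y(v)-\bar z,y(w)\rangle_{L^2(Q)}$ as a pairing involving only $p$ and the test control $w$. First I would fix the optimal control $v$ and, exploiting the linearity established in the preceding lemma, introduce for each $w\in\mathcal U$ the state $y(w)$ driven by $w$ with zero initial datum, i.e.\ the solution of the homogeneous-data version of \eqref{direct_pb_y} with right-hand side $w\mathbf 1_{\mathcal O}$. I would then let $p$ be the unique solution of the adjoint problem with source $\beta(y(v)-\bar z)$, whose well-posedness is guaranteed by the same energy estimates as for \eqref{direct_pb_y} run backward in time.

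The central computation is to multiply the adjoint equation by $y(w)$, integrate over $Q$, and transfer all derivatives onto $y(w)$. The time derivative is integrated by parts in $t$, and the two boundary contributions vanish because $p(\cdot,T)=0$ and $y(w)(\cdot,0)=0$; the Laplacian is moved using Green's formula, the surface terms vanishing thanks to the homogeneous Dirichlet conditions on both $p$ and $y(w)$. The delicate point is the non-local term: I would apply Fubini's theorem to the double integral $\int_Q\big(\int_\Omega K(\theta,x,t)p(\theta,t)\,d\theta\big)y(w)(x,t)$ and relabel the variables $x\leftrightarrow\theta$, which converts it into $\int_Q p(x,t)\big(\int_\Omega K(x,\theta,t)y(w)(\theta,t)\,d\theta\big)$. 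This is precisely why the adjoint in \eqref{sos_part1} carries the transposed kernel $K(\theta,x,t)$ rather than $K(x,\theta,t)$, and I expect verifying this integral transposition to be the main technical step.

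Collecting the three pieces, the left-hand side becomes $\int_Q p\big(\partial_t y(w)-\Delta y(w)+\int_\Omega K(x,\theta,t)y(w)(\theta,t)\,d\theta\big)=\int_Q p\,w\mathbf 1_{\mathcal O}=\langle p,w\rangle_{\mathcal U}$, since $y(w)$ solves the state equation with source $w\mathbf 1_{\mathcal O}$, while the right-hand side equals $\beta\langle y(v)-\bar z,y(w)\rangle_{L^2(Q)}$. Substituting the resulting identity $\beta\langle y(v)-\bar z,y(w)\rangle_{L^2(Q)}=\langle p,w\rangle_{\mathcal U}$ into \eqref{euler_lagrange_part1} yields $\langle p+\mu v,w\rangle_{\mathcal U}=0$ for every $w\in\mathcal U$, whence \eqref{adjoint_state_eq_part1} follows by the arbitrariness of $w$. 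Finally, existence and uniqueness of the triplet $(v,y,p)$ solving \eqref{sos_part1}--\eqref{adjoint_state_eq_part1} follow by combining the uniqueness of the minimizer from Proposition \ref{exist_prop_sect1} with the well-posedness of the state and adjoint problems in $L^2(Q)$.
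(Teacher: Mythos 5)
Your proposal is correct and follows essentially the same route as the paper: the duality computation (integration by parts in time and space plus Fubini's theorem with the variable relabeling $x\leftrightarrow\theta$ to transpose the kernel) yields $\beta\langle y(v)-\bar z, y(w)\rangle_{L^2(Q)}=\langle p,w\rangle_{\mathcal U}$, which combined with \eqref{euler_lagrange_part1} gives \eqref{adjoint_state_eq_part1}. Your version is in fact slightly more careful than the paper's in spelling out why the boundary terms vanish and in using the source $\beta(y(v)-\bar z)$ consistently with \eqref{sos_part1}.
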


\begin{proof}
Without losing generality we can assume here that $y_0=0$, since in the case of non-zero initial data the result may be equivalently obtained by means of a simple change of variables. 

We multiply \eqref{direct_pb_y} by $p$ solution of \eqref{adjoint_pb_p} and we integrate over the domain $Q$. By taking into account the boundary and initial conditions, we get by Fubini's theorem
\begin{align*}
	\int_Q {\beta} (y(v)-\bar{z})y(w)\,dxdt &=\int_Q\left(-\frac{\partial p}{\partial t}-\Delta p+ \int_\Omega K(\theta,x,t)p(\theta,t)\,d\theta\right)y(x,t;w)\,dxdt
	\\
	&=\int_Q\left(\frac{\partial y}{\partial t}-\Delta y+\int_\Omega K(x,\theta,t)y(\theta,t;w)\,d\theta\right)p(x,t)\,dxdt 
	\\
	&= \int_0^T\int_{\mathcal{O}}pw\,dxdt.
\end{align*}
Then, from \eqref{euler_lagrange_part1} we immediately obtain $\langle\, p+\mu v,\, w\rangle_{\mathcal{U}}=0$ for all $w\in \mathcal{U}$,
from which we get \eqref{adjoint_state_eq_part1}.
\end{proof}

\section{Incomplete data problems}\label{sec:4}

In this section, we consider the case of a missing or incomplete initial datum $y_0$, which is a very typical situation when studying natural phenomena governed by \eqref{direct_pb_y}. This problem is described below and will be addressed by using the concept of no-regret controls, which has been introduced by J.-L. Lions in \cite{lions1992controle}.

\subsection{Preliminaries}

We are interested in studying the control of the model \eqref{direct_pb_y}-\eqref{quadratic_cost} when it describes the dynamics of the density population $y(x,t)$ in the case in which the initial data is missing or incomplete. 

This is a very typical situation when modeling many phenomena in physics, ecology, dynamic population, and several other fields. As a matter of fact, in those frameworks we may face the problem of incomplete data because of their practical inaccessibility, or because sometimes we have a great variety of possibilities when choosing for instance the initial conditions. In addition, boundary conditions may also be unknown or only partially known on a subset of the boundary that may be inaccessible to measurements. The same goes for source terms that can be difficult to access, or the structure of the domain, which can also be imperfectly known (for example in oil well management). 

In our case, we will focus on a problem with missing initial datum. This means that this time we are assuming that $y_0$ is an unknown function, belonging to some vector closed subspace $G$ of $L^2(\Omega)$. We are still concerned with optimal controls $v\in\mathcal{U}$, i.e. in solving
\begin{align*}
	\inf_{v\in {\mathcal U}}J(v,y_0),\qquad \mbox{ for all } y_0\in G.
\end{align*}
where $J(v,y_0)$ denotes the explicit dependence on $y_0$ of the functional $J$ (see Lemma \ref{lem:equiv_rest} for a precise description). Nevertheless, this minimization problem has actually no sense since $G$ is either the empty space or it has an infinite number of elements. 

To deal with this problem, we use the low-regret concept of J.-L. Lions (see \cite{lions1992controle}), which is well suited for incomplete data problems, and it is based on replacing \eqref{min_J_part1} by
\begin{equation}\label{min_J_part2}
	 \inf_{v\in {\mathcal U}}\Bigg(\sup_{y_0\in G} \Big(J(v,y_0)-J(0,y_0)-\gamma\|y_0\|_G^2\Big)\Bigg),
\end{equation}
where $\gamma$ is a small parameter.

The meaning of \eqref{min_J_part2} is to look for the control not making things worse with respect to doing nothing (i.e. case $v=0$). A solution to \eqref{min_J_part2} is called a low-regret optimal control (see \cite{jacob2010optimal,lions1992controle,louison_1,nakoulima2000perturbations} for further information on the method).

We show two preliminary results.

\begin{lemma}\label{lem:equiv_rest}
For every $v\in\mathcal{U}$ and for any $y_0\in G$, we have the property
\begin{equation}\label{form0_part2}
	J(v,y_0)-J(0,y_0)= J(v,0)-J(0,0)+ \beta\langle\,y(v,0),y(0,y_0)\rangle_{L^2(Q)}.
\end{equation}
\end{lemma}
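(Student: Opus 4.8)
The plan is to exploit the linearity of the non-local parabolic problem \eqref{direct_pb_y} with respect to \emph{both} the control and the initial datum. The equation is linear in $y$ (the non-local term $\int_\Omega K\,y\,d\theta$ being itself linear in $y$), and the source $v\mathbf{1}_{\mathcal O}$ and the initial condition $y_0$ enter additively, so the solution map obeys a superposition principle. Writing $y(v,y_0)$ for the state associated with control $v$ and initial datum $y_0$, one has
\[
	y(v,y_0)=y(v,0)+y(0,y_0),
\]
where $y(v,0)$ solves \eqref{direct_pb_y} with homogeneous initial data and $y(0,y_0)$ solves the uncontrolled equation (with $v\equiv 0$) starting from $y_0$; in particular $y(0,0)=0$. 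This decomposition is the structural heart of the lemma, and it is justified by the well-posedness and linearity recorded in Section \ref{sec:2}, namely the uniqueness of the finite-energy solution in $L^2(0,T;H^1_0(\Omega))\cap H^1(0,T;H^{-1}(\Omega))$ for right-hand sides in $L^2(0,T;H^{-1}(\Omega))$.

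I would then substitute this decomposition into the four copies of the functional appearing in \eqref{form0_part2}. Recalling from \eqref{quadratic_cost} that $J(v,y_0)=\beta\norm{y(v,y_0)-\bar z}{L^2(Q)}^2+\mu\norm{v}{\mathcal U}^2$, and abbreviating $\xi:=y(v,0)$ and $\eta:=y(0,y_0)$, the core computation is to expand each squared $L^2(Q)$-norm through the polarization identity $\norm{a+b}{L^2(Q)}^2=\norm{a}{L^2(Q)}^2+2\langle a,b\rangle_{L^2(Q)}+\norm{b}{L^2(Q)}^2$. Explicitly, $J(v,y_0)-J(0,y_0)$ reduces to $\beta\norm{\xi}{L^2(Q)}^2+2\beta\langle\xi,\eta-\bar z\rangle_{L^2(Q)}+\mu\norm{v}{\mathcal U}^2$, since the two terms $\beta\norm{\eta-\bar z}{L^2(Q)}^2$ cancel, while $J(v,0)-J(0,0)$ reduces to $\beta\norm{\xi}{L^2(Q)}^2-2\beta\langle\xi,\bar z\rangle_{L^2(Q)}+\mu\norm{v}{\mathcal U}^2$, the two $\beta\norm{\bar z}{L^2(Q)}^2$ terms cancelling.

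Subtracting, all contributions in $\xi$ alone, in $\bar z$, and the control cost $\mu\norm{v}{\mathcal U}^2$ disappear, and what remains is exactly the cross term $2\beta\langle\xi,\eta\rangle_{L^2(Q)}=2\beta\langle y(v,0),y(0,y_0)\rangle_{L^2(Q)}$, matching the right-hand side of \eqref{form0_part2} up to the value of the constant. The argument is purely algebraic once superposition is in hand, so I anticipate no genuine obstacle; the two points requiring care are the rigorous justification of the superposition decomposition $y(v,y_0)=y(v,0)+y(0,y_0)$ in the finite-energy class, and the precise bookkeeping of the polarization cross terms — the latter producing, by my computation, the coefficient $2\beta$ (rather than $\beta$) in front of $\langle y(v,0),y(0,y_0)\rangle_{L^2(Q)}$.
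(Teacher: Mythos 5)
Your proof is correct and follows essentially the same route as the paper's: decompose $y(v,y_0)=y(v,0)+y(0,y_0)$ by superposition and expand the quadratic functional, with all terms cancelling except the cross term. Your careful bookkeeping yielding the coefficient $2\beta$ rather than $\beta$ is also right: the paper's own proof ends with $2\beta\langle y(v,0),y(0,y_0)\rangle_{L^2(Q)}$ and the subsequent identity \eqref{form_part2} carries the factor $2\beta$, so the $\beta$ appearing in the displayed statement \eqref{form0_part2} is a typo in the lemma, not an error in your argument.
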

\begin{proof}
We denote by $y(v,y_0):=y(x,t,v,y_0)$ the solution to \eqref{direct_pb_y} with control $v\in \mathcal U$ and initial datum $y_0\in G$. From the linearity of the problem \eqref{direct_pb_y} we have $y(v,y_0)=y(v,0)+y(0,y_0)$. Then
\begin{align*}
	J(v,y_0)-J(0,y_0) &= {\beta}\norm{y(v,y_0)-\bar{z}}{L^2(Q)}^2 + \mu\norm{v}{\mathcal{U}}^2-{\beta}\norm{y(0,y_0)-\bar{z}}{L^2(Q)}^2
	\\
	&= {\beta}\norm{y(v,0)}{L^2(Q)}^2 + 2\beta\langle y(v,0),y(0,y_0)-\bar{z}\rangle_{L^2(Q)} + \mu\norm{v}{\mathcal{U}}^2
	\\
	&= {\beta}\norm{y(v,0)-\bar{z}+\bar{z}}{L^2(Q)}^2 + 2\beta\langle y(v,0),y(0,y_0)-\bar{z}\rangle_{L^2(Q)} +\mu\norm{v}{\mathcal{U}}^2
	\\
	&= J(v,0)-J(0,0)+ 2\beta\langle y(v,0),y(0,y_0)\rangle_{L^2(Q)}.
\end{align*}
\end{proof}

\begin{lemma}\label{lem:adjoint} 
Let $\xi:=\xi(x,t,v,0)$ be the solution of the adjoint problem
\begin{equation}\label{adjoint_pb_xi}
	\begin{cases}
		\displaystyle -\frac{\partial \xi}{\partial t}-\Delta \xi+\int_\Omega K(\theta,x,t)\,\xi(\theta,t)\,d\theta = y(v,0), & (x,t)\in Q,
		\\
		\xi = 0, & (x,t)\in \Sigma,
		\\
		\xi(x,T) = 0, & x\in\Omega.
	\end{cases}
\end{equation}
Then, we have
\begin{equation}\label{form_part2}
	J(v,y_0)-J(0,y_0)= J(v,0)-J(0,0)+2\beta\langle \xi(0)\,,\,y_0\rangle_{G',G}
\end{equation}
where $\xi(0):=\xi(x,0,v,0)$ is the solution of the adjoint problem (\ref{adjoint_pb_xi}) at time $t=0$ and where $G'$ is the topological dual of $G$.
\end{lemma}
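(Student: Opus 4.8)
The plan is to transform the term $\langle y(v,0),\,y(0,y_0)\rangle_{L^2(Q)}$ appearing in Lemma \ref{lem:equiv_rest} into a pairing involving only the initial datum $y_0$, by integrating against the adjoint state $\xi$. Starting from \eqref{form0_part2}, the only piece that needs to be rewritten is the coupling term; everything else (namely $J(v,0)-J(0,0)$) is already in the desired form. So the task reduces to proving the identity
\begin{equation*}
	\langle y(v,0),\,y(0,y_0)\rangle_{L^2(Q)} = \langle \xi(0),\,y_0\rangle_{G',G}.
\end{equation*}

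First I would multiply the adjoint equation \eqref{adjoint_pb_xi} by $y(0,y_0)$ and integrate over $Q$. By definition, the right-hand side of \eqref{adjoint_pb_xi} is exactly $y(v,0)$, so the left side of the resulting identity is precisely $\langle y(v,0),\,y(0,y_0)\rangle_{L^2(Q)}$. On the other side, I would integrate by parts in space and time, using that $y(0,y_0)$ solves \eqref{direct_pb_y} with zero control and initial datum $y_0$. The spatial Laplacian terms are symmetric and cancel thanks to the homogeneous Dirichlet conditions $\xi=y=0$ on $\Sigma$. For the non-local terms, the crucial point is that the kernel in \eqref{adjoint_pb_xi} is $K(\theta,x,t)$ (with the roles of $x$ and $\theta$ swapped relative to \eqref{direct_pb_y}); applying Fubini's theorem then makes the two non-local contributions coincide and cancel, exactly as in the proof of Proposition \ref{prop_characterization_part1}.

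The one surviving contribution comes from the time-derivative terms. Integrating $-\partial_t\xi \cdot y(0,y_0)$ and $\partial_t y(0,y_0)\cdot \xi$ by parts in $t$ produces the boundary term $\left[\xi(t)\,y(0,y_0)(t)\right]_{0}^{T}$. At $t=T$ the terminal condition $\xi(x,T)=0$ kills the upper endpoint, while at $t=0$ we have $y(0,y_0)(x,0)=y_0(x)$, leaving exactly $\langle \xi(0),\,y_0\rangle$. Substituting this back into \eqref{form0_part2} yields \eqref{form_part2}.

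The main obstacle is purely a matter of bookkeeping the signs and the endpoint contributions in the integration by parts, together with the careful use of the swapped kernel to ensure the non-local terms truly cancel under Fubini. No regularity issue is serious here, since by the hypotheses of Section \ref{sec:2} both $y$ and $\xi$ belong to $L^2(0,T;H^1_0(\Omega))\cap H^1(0,T;H^{-1}(\Omega))$, which is precisely the space in which these integrations by parts and the duality pairing $\langle \xi(0),y_0\rangle_{G',G}$ are justified.
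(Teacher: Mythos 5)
Your proposal is correct and follows essentially the same route as the paper: multiply the adjoint equation \eqref{adjoint_pb_xi} by $y(0,y_0)$, integrate by parts so that the Laplacian terms cancel via the Dirichlet conditions and the non-local terms cancel via Fubini with the transposed kernel, and collect the single surviving endpoint contribution $\int_\Omega \xi(0)\,y_0\,dx$ from the time integration, which is then inserted into \eqref{form0_part2}. The only caveat, inherited from the paper rather than introduced by you, is the factor-of-two mismatch between the stated form of \eqref{form0_part2} (with $\beta$) and the $2\beta$ actually produced in its proof and carried into \eqref{form_part2}.
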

\begin{proof}
If we multiply the first equation of \eqref{adjoint_pb_xi} by $y(0,y_0)$ and we integrate by parts, we obtain
\begin{align*}
	\langle y(v,0),y(0,y_0)\rangle_{L^2(Q)} &= \int_Q y(v,0)y(0,y_0)\,dxdt
	\\
	&=\int_Q\left(-\frac{\partial \xi}{\partial t}-\Delta \xi+ \int_\Omega K(\theta,x,t)\,\xi(\theta,t)\,d\theta\right)y(0,y_0)\,dxdt
	\\
	&=\int_Q\left(\frac{\partial y}{\partial t}-\Delta y+\int_\Omega K(x,\theta,t)\,y(\theta,t,w)\,d\theta\right)\xi(x,t)\,dxdt + \displaystyle\int_\Omega \xi(0)y_0(x)\,dx
	\\
	&= \int_\Omega \xi(0)y_0(x)\,dx.
\end{align*}
Then, the result follows immediately by applying \eqref{form0_part2}.
\end{proof}

Lemma \ref{lem:adjoint} can now be employed to transform the inf/sup problem \eqref{min_J_part2} into a classical minimization one. Indeed, using \eqref{form_part2}, we obtain from \eqref{min_J_part2} that
\begin{align*}
	\displaystyle\inf_{v\in {\mathcal U} }\left[J(v,0)-J(0,0) + 2\beta \displaystyle\sup_{y_0\in G} \Big(\langle \xi(0),y_0\rangle_{G',G} - \frac{\gamma}{2\beta}\|y_0\|_{G}^2\Big)\right].
\end{align*}
This, together with the fact that
\begin{align*}
	\sup_{y_0\in G} \Big(\langle \xi(0),y_0\rangle_{G',G} -\frac{\gamma}{2\beta}\|y_0\|_{G}^2\Big) =\frac{\beta}{2\gamma}\|\xi(0)\|^2_{G'},
\end{align*}
gives us the following minimization problem equivalent to \eqref{min_J_part2}
\begin{align}\label{infgamma}
	&\inf_{v\in {\mathcal U} }{\mathcal{J}}^{\gamma}(v) 
	\\
	&\mathcal{J}^{\gamma}(v)=J(v,0)-J(0,0) + \frac{\beta^2}{\gamma}\Big\|\xi(0)\Big\|_{G'}^2. \notag
\end{align}
In what follows, we will always focus on \eqref{infgamma} instead of the original low-regret problem \eqref{min_J_part2}.

\subsection{Existence of the optimal low-regret control}

We discuss here the existence of a low-regret control for \eqref{direct_pb_y}. In particular, the main result of this section will be the following. 

\begin{theorem} 
There exists a unique low-regret optimal control function denoted by $v_\gamma \in{\mathcal U}$, solution to the minimization problem \eqref{infgamma}.
\end{theorem}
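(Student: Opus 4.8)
The plan is to follow exactly the same template as Proposition \ref{exist_prop_sect1}, namely to reduce the statement to the Direct Method of the Calculus of Variations by checking that the reduced functional $\mathcal{J}^{\gamma}$ in \eqref{infgamma} is (weakly) lower semi-continuous, strictly convex, and coercive. The first step is to make the functional explicit. Since with $v=0$ and $y_0=0$ we have $y(0,0)=0$, it holds $J(0,0)=\beta\norm{\bar z}{L^2(Q)}^2$, so that
\begin{equation*}
	\mathcal{J}^{\gamma}(v)=\beta\norm{y(v,0)-\bar z}{L^2(Q)}^2+\mu\norm{v}{\mathcal U}^2+\frac{\beta^2}{\gamma}\norm{\xi(0)}{G'}^2-\beta\norm{\bar z}{L^2(Q)}^2.
\end{equation*}
This exhibits $\mathcal{J}^{\gamma}$ as a quadratic functional of $v$ up to an additive constant, which is the structural fact driving the whole argument.

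Next I would record the two linear-continuous dependencies underlying this quadratic structure. By the energy estimates for \eqref{direct_pb_y}, the map $v\mapsto y(v,0)$ is linear and continuous from $\mathcal U$ into $L^2(Q)$. Composing with the adjoint problem \eqref{adjoint_pb_xi}, whose source is precisely $y(v,0)$, the solution $\xi$ depends linearly and continuously on $v$ in $L^2(0,T;H^1_0(\Omega))\cap H^1(0,T;H^{-1}(\Omega))$; by the standard parabolic trace embedding into $\mathcal{C}([0,T];L^2(\Omega))$ the time slice $\xi(0)$ is well defined and the map $v\mapsto\xi(0)$ is linear and continuous from $\mathcal U$ into $G'$ (viewing $\xi(0)$ through its action on the closed subspace $G$). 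Consequently each of the three $v$-dependent terms is a continuous, nonnegative quadratic form or a continuous affine term, so $\mathcal{J}^{\gamma}$ is continuous on $\mathcal U$; being also convex (see below), it is weakly lower semi-continuous, which handles property~1.

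For strict convexity, the terms $\beta\norm{y(v,0)-\bar z}{L^2(Q)}^2$ and $\frac{\beta^2}{\gamma}\norm{\xi(0)}{G'}^2$ are convex as compositions of a norm squared with affine/linear maps, while the term $\mu\norm{v}{\mathcal U}^2$ with $\mu>0$ is \emph{strictly} convex; their sum is therefore strictly convex, giving property~2 and the uniqueness of the minimizer. Coercivity (property~3) is immediate and does not even require the continuity estimates: discarding the two nonnegative terms yields $\mathcal{J}^{\gamma}(v)\geq \mu\norm{v}{\mathcal U}^2-\beta\norm{\bar z}{L^2(Q)}^2\to+\infty$ as $\norm{v}{\mathcal U}\to+\infty$, exactly as in Proposition \ref{exist_prop_sect1}. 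With the three properties in hand, the Direct Method yields existence and uniqueness of $v_\gamma\in\mathcal U$.

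The only genuinely non-routine point is the second paragraph: one must justify that $\xi(0)\in G'$ is well defined and that $v\mapsto\xi(0)$ is continuous, since the extra regret term $\frac{\beta^2}{\gamma}\norm{\xi(0)}{G'}^2$ is what distinguishes this problem from the classical one of Proposition \ref{exist_prop_sect1}. This rests on the parabolic trace at $t=0$ for the adjoint system \eqref{adjoint_pb_xi}, whose non-local integral term is a bounded perturbation (because $K\in L^\infty$) and hence does not affect the energy estimates; once this continuity is secured, the remainder is a verbatim repetition of the direct-method argument.
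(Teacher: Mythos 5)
Your proof is correct and follows essentially the same route as the paper: both arguments are instances of the Direct Method applied to the reduced functional $\mathcal{J}^{\gamma}$, the paper unfolding it as a minimizing-sequence argument with a priori bounds and weak compactness, while you verify the abstract hypotheses (lower semi-continuity, strict convexity, coercivity) exactly as in Proposition~\ref{exist_prop_sect1}. If anything, your explicit check of weak lower semi-continuity via the continuity of $v\mapsto y(v,0)$ and $v\mapsto\xi(0)$ closes a step the paper leaves implicit when it passes from the weakly convergent subsequence to the conclusion that its limit attains the infimum.
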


\begin{proof} 
First of all, notice that for all $v\in {\mathcal U}$ we have $\displaystyle {\mathcal J}^\gamma(v)\geq -J(0,0)$ and, therefore, $\dis \displaystyle\inf_{v\in {{\mathcal U} }}{\mathcal J}^\gamma(v)$ exists. Let then $\{v_\gamma^n\}$ be a minimizing sequence such that $\dis d_\gamma=\lim_{n\to+\infty} {\mathcal J}^\gamma(v_\gamma^n)$. We have
\begin{align*}
	-J(0,0)\leq {\mathcal J}^\gamma(v_\gamma^n) =J(v_\gamma^n,0)-J(0,0) +\frac{\beta^2}{\gamma} \Big\|\xi(v_\gamma^n,0)(0)\Big\|_{G'}^2 \leq d_\gamma+1,
\end{align*}
for $n$ sufficiently large. From this, we deduce that there exists a positive constant $c_\gamma$, independent on $n$, such that the following estimates hold
\begin{equation}
	\Big\|v_\gamma^n\Big\|_{\mathcal{U}} \leq c_\gamma,\quad \Big\|y(v_\gamma^n,0)-\bar{z}\Big\|_{L^2(Q)}\leq 
c_\gamma,\quad\mbox{and}\;\; \frac{\beta}{\sqrt{\gamma}} \Big\|\xi(v_\gamma^n,0)(0)\Big\|_{G'} \leq c_\gamma.
\end{equation}

Then we deduce that the sequence $\{v_\gamma^n\}$ is bounded in $\mathcal{U}$  and therefore there exists a subsequence, converging weakly in $\mathcal{U}$ to $v_\gamma$. Moreover, thanks to the strict convexity of ${\mathcal J}^\gamma$, $v_\gamma$ is unique.
\end{proof}

\subsection{Characterization of the low-regret control}

This section is devoted to a characterization of the low-regret control fro \eqref{direct_pb_y} through the corresponding optimality system. To this end, we shall first prove the following result.

\begin{lemma}\label{lem:EL}
Let $v_\gamma$ be the optimal low-regret control solution of the minimization problem \eqref{infgamma}, and denote $y_\gamma:=y(v_\gamma,0)$ and $\xi_\gamma:=\xi(v_\gamma,0)$. Then, for all $w\in\mathcal U$ we have the inequality
\begin{equation}\label{euler_lagrange_part2}
	\beta\langle y_\gamma-\bar{z},y(w,0)\rangle_{L^2(Q)} + \mu\langle v_\gamma,w\rangle_{\mathcal{U}} + \frac{\beta^2}{\gamma}\left\langle \xi_\gamma(0),\xi(w,0)(0) \right\rangle_{G'} = 0.
\end{equation} 
\end{lemma}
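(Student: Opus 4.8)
The plan is to derive the Euler-Lagrange inequality by computing the directional derivative of the convex functional $\mathcal{J}^\gamma$ at the minimizer $v_\gamma$ in an arbitrary admissible direction $w$. First I would recall that $\mathcal{J}^\gamma(v)=J(v,0)-J(0,0)+\frac{\beta^2}{\gamma}\|\xi(v,0)(0)\|_{G'}^2$, and exploit the linearity of the control-to-state map: since the state equation \eqref{direct_pb_y} is linear in $v$, we have $y(v_\gamma+\lambda w,0)=y(v_\gamma,0)+\lambda y(w,0)$, and since the adjoint problem \eqref{adjoint_pb_xi} is in turn linear in $y(v,0)$, we likewise have $\xi(v_\gamma+\lambda w,0)=\xi(v_\gamma,0)+\lambda\xi(w,0)$. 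This is the key structural observation that makes every term expand cleanly.

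Next I would form the difference quotient $\frac{\mathcal{J}^\gamma(v_\gamma+\lambda w)-\mathcal{J}^\gamma(v_\gamma)}{\lambda}$ and expand each of the three pieces exactly as in the proof of the earlier Euler-Lagrange Lemma \eqref{euler_lagrange_part1}. The quadratic tracking term $\beta\|y(v_\gamma+\lambda w,0)-\bar{z}\|_{L^2(Q)}^2$ contributes, after subtracting the value at $v_\gamma$ and dividing by $\lambda$, a term $2\beta\langle y_\gamma-\bar{z},y(w,0)\rangle_{L^2(Q)}+O(\lambda)$; the penalization $\mu\|v_\gamma+\lambda w\|_{\mathcal{U}}^2$ contributes $2\mu\langle v_\gamma,w\rangle_{\mathcal{U}}+O(\lambda)$; and the regret term $\frac{\beta^2}{\gamma}\|\xi_\gamma(0)+\lambda\xi(w,0)(0)\|_{G'}^2$ contributes $\frac{2\beta^2}{\gamma}\langle\xi_\gamma(0),\xi(w,0)(0)\rangle_{G'}+O(\lambda)$, using the expansion of the squared $G'$-norm via its inner product. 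Letting $\lambda\to 0$ and using that $v_\gamma$ is a minimizer of the convex, Gateaux-differentiable functional $\mathcal{J}^\gamma$, the directional derivative vanishes for every $w\in\mathcal{U}$, yielding \eqref{euler_lagrange_part2} after dividing the common factor $2$.

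I would note that the statement is phrased as an inequality, but because $\mathcal{U}$ is the whole linear space $L^2(\mathcal{O}\times(0,T))$ (no control constraints), testing with both $w$ and $-w$ forces the directional derivative to be exactly zero; hence the inequality is in fact an equality, consistent with the displayed $=0$. The only genuinely delicate point is the regret term living in the dual space $G'$: I must make sure the map $w\mapsto\xi(w,0)(0)\in G'$ is well defined and linear so that the squared-norm expansion $\|a+\lambda b\|_{G'}^2=\|a\|_{G'}^2+2\lambda\langle a,b\rangle_{G'}+\lambda^2\|b\|_{G'}^2$ is legitimate, with $\langle\cdot,\cdot\rangle_{G'}$ denoting the inner product induced by the Hilbert structure of $G'$ (identifying $G$ with a closed subspace of $L^2(\Omega)$). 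Once this linearity and the $G'$-Hilbert structure are in hand, the computation is entirely parallel to Lemma \eqref{euler_lagrange_part1} and presents no further obstacle.
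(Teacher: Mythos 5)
Your proposal is correct and follows essentially the same route as the paper: expand the difference quotient $\bigl(\mathcal{J}^\gamma(v_\gamma+\lambda w)-\mathcal{J}^\gamma(v_\gamma)\bigr)/\lambda$ using the linearity of $v\mapsto y(v,0)$ and hence of $v\mapsto\xi(v,0)(0)$, then let $\lambda\to 0$ and invoke the first-order optimality condition. Your observation that testing with $\pm w$ turns the variational inequality into the displayed equality is consistent with the paper (the word ``inequality'' in the statement is a slip), and your attention to the Hilbert structure of $G'$ is a point the paper uses implicitly.
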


\begin{proof}
We have
\begin{align*}
	\frac{{\mathcal J}^\gamma(v_\gamma+\lambda w) -{\mathcal J}^\gamma(v_\gamma)}{\lambda} =&\, J(v_\gamma+\lambda w,0) + \frac{\beta^2}{\gamma}\norm{\xi(v_\gamma+\lambda w,0)(0)}{G'}^2 - J(v_\gamma,0) - \frac{\beta^2}{\gamma}\norm{\xi(v_\gamma,0)(0)}{G'}^2
	\\
	=&\, {\beta}\norm{(y_\gamma-\bar{z})+\lambda y(w,0)}{L^2(Q)}^2 + \mu\norm{v_\gamma+\lambda w}{\mathcal{U}}^2 -{\beta}\norm{y_\gamma-\bar{z}}{L^2(Q)}^2 - \mu\norm{v_\gamma}{\mathcal{U}}^2 
	\\
	&+ \frac{\beta}{\gamma}\norm{\xi_\gamma(0) +\lambda \xi(w,0)(0)}{G'}^2 -\frac{\beta}{\gamma}\norm{\xi_\gamma(0)}{G'}^2 
	\\
	=&\, {\lambda^2\beta}\norm{y(w,0)}{L^2(Q)}^2 +2\lambda\beta\langle y_\gamma-\bar{z},y(w,0)\rangle_{L^2(Q)} + \mu\lambda^{2}\norm{w}{\mathcal{U}}^2 +2\mu\lambda\langle v_\gamma,w\rangle_{\mathcal{U}} 
	\\
	&+ \frac{\beta^2\lambda^2}{\gamma}\norm{\xi(w,0)(0)}{G'}^2 +\frac{2\beta^2\lambda}{\gamma}\langle \xi_\gamma(0),\xi(w,0)(0)\rangle_{G'}.
\end{align*}
Identity \eqref{euler_lagrange_part2} is then an immediate consequence of the Euler-Lagrange condition, which gives
\begin{align*}
	0&=\lim_{\lambda\rightarrow 0} \left(\frac{{\mathcal J}^\gamma(v_\gamma+\lambda w) -{\mathcal J}^\gamma(v_\gamma)}{\lambda}\right) 
	\\
	&={\beta} \langle y_\gamma-\bar{z},y(w,0)\rangle_{L^2(Q)} + N\langle v_\gamma,w\rangle_{\mathcal{U}} +\frac{\beta^2}{\gamma} \left\langle \xi_\gamma(0),\xi(w,0)(0) \right\rangle_{G'}, \quad \mbox{ for all } w \in\mathcal{U}.
\end{align*}
\end{proof}

\noindent By means of Lemma \ref{lem:EL}, we can now obtain the following characterization of the low-regret control.

\begin{proposition}\label{prop_characterization_part2}
The low-regret control $v_\gamma$ is characterized by the quadruplet 
\begin{align*}
	(v_\gamma,y_\gamma,\xi_\gamma,\sigma_\gamma) \in \mathcal{U} \times L^2(Q)\times L^2(Q)\times L^2(Q),
\end{align*}
unique solution to the optimality system
\begin{align}\label{sos_part2}
	\begin{cases}
		\displaystyle \frac{\partial y_\gamma}{\partial t} -\Delta y_\gamma+ \int_\Omega K(x,\theta,t)\,y_\gamma(\theta,t)\,d\theta = v_\gamma{\bf 1}_{\mathcal{O}}, & (x,t)\in Q,
		\\[10pt]
 		\displaystyle -\frac{\partial \xi_\gamma}{\partial t} -\Delta \xi_\gamma+ \int_\Omega K(\theta,x,t)\,\xi_\gamma(\theta,t)\,d\theta = y_\gamma, & (x,t)\in Q,
 		\\[10pt]
		\displaystyle\frac{\partial \sigma_\gamma}{\partial t} -\Delta \sigma_\gamma+ \int_\Omega K(x,\theta,t)\,\sigma_\gamma(\theta,t)\,d\theta = 0, & (x,t)\in Q,
		\\[10pt] 
		\displaystyle -\frac{\partial p_\gamma}{\partial t} -\Delta p_\gamma+ \int_\Omega K(\theta,x,t)\, p_\gamma(\theta,t)\,d\theta = {\beta}(y_\gamma-\bar{z})-\sigma_\gamma, & (x,t)\in Q,
		\\[10pt]
		y_\gamma=\xi_\gamma=\sigma_\gamma =p_\gamma = 0,  & (x,t)\in \Sigma,
		\\[10pt]
		\displaystyle y_\gamma(0)=0,\; \sigma_\gamma(0)=-\frac{\beta^2}{\gamma}\xi_\gamma(0),\; \xi_\gamma(T) =p_\gamma(T) = 0, & x\in\Omega,
	\end{cases}
\end{align}
where $\sigma_\gamma:=\sigma(v_\gamma,0)$ and $p_\gamma:=p(v_\gamma,0)$, and with the adjoint state equation
\begin{equation}\label{adjoint_state_eq_part2}
	 p_\gamma+\mu v_\gamma=0, \qquad \mbox{in} \quad \mathcal{O}\times ]0,T[.
\end{equation}
\end{proposition}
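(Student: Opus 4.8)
The plan is to start from the Euler--Lagrange identity \eqref{euler_lagrange_part2} of Lemma \ref{lem:EL} and to rewrite each of its three terms as a duality pairing against the test control $w$ over $\mathcal{O}\times(0,T)$. Once this is achieved, the arbitrariness of $w\in\mathcal U$ forces the resulting integrand to vanish, which yields precisely the adjoint state equation \eqref{adjoint_state_eq_part2}, while the auxiliary states introduced along the way assemble into the optimality system \eqref{sos_part2}. The first two terms, $\beta\langle y_\gamma-\bar z,y(w,0)\rangle_{L^2(Q)}$ and $\mu\langle v_\gamma,w\rangle_{\mathcal U}$, are handled exactly as in the proof of Proposition \ref{prop_characterization_part1}: a backward adjoint variable $p_\gamma$ will absorb the $L^2(Q)$ pairing and turn it into an integral of $p_\gamma w$ over $\mathcal O\times(0,T)$. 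The genuinely new ingredient is the third term $\frac{\beta^2}{\gamma}\langle \xi_\gamma(0),\xi(w,0)(0)\rangle_{G'}$, which involves the value at $t=0$ of the adjoint state $\xi(w,0)$ associated with the \emph{test} control, and this is where the forward variable $\sigma_\gamma$ enters.

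Concretely, I would first introduce $\sigma_\gamma$ as the solution of the \emph{forward} problem with zero source, homogeneous Dirichlet data, and the initial condition $\sigma_\gamma(0)=-\frac{\beta^2}{\gamma}\xi_\gamma(0)$, after identifying $G'$ with (a subspace of) $L^2(\Omega)$ via Riesz so that $\xi_\gamma(0)$ is a legitimate initial datum. Writing $\xi:=\xi(w,0)$ for the solution of \eqref{adjoint_pb_xi} with control $w$, I would then pair the equation for $\sigma_\gamma$ with $\xi$ and integrate over $Q$. Integrating by parts in time (using $\xi(T)=0$, and $\sigma_\gamma(T)$ paired against $\xi(T)=0$), in space (using the homogeneous boundary conditions), and applying Fubini to swap the roles of $x$ and $\theta$ in the kernel term --- exactly the manipulation already used in the proof of Lemma \ref{lem:adjoint} and of Proposition \ref{prop_characterization_part1} --- the left-hand side collapses, and since the adjoint operator acting on $\xi$ reproduces the source $y(w,0)$ of \eqref{adjoint_pb_xi}, one is left with $\langle \sigma_\gamma(0),\xi(0)\rangle = \langle \sigma_\gamma, y(w,0)\rangle_{L^2(Q)}$. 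Consequently $\frac{\beta^2}{\gamma}\langle \xi_\gamma(0),\xi(w,0)(0)\rangle_{G'}=-\langle \sigma_\gamma,y(w,0)\rangle_{L^2(Q)}$, so the third term has been converted into an $L^2(Q)$ pairing of the same type as the first.

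With this substitution, \eqref{euler_lagrange_part2} reads $\langle \beta(y_\gamma-\bar z)-\sigma_\gamma,\,y(w,0)\rangle_{L^2(Q)}+\mu\langle v_\gamma,w\rangle_{\mathcal U}=0$, which naturally prescribes the source $\beta(y_\gamma-\bar z)-\sigma_\gamma$ of the backward adjoint $p_\gamma$ in \eqref{sos_part2}. I would then pair the $p_\gamma$-equation with $y(w,0)$ and integrate by parts once more; since $y(w,0)$ solves the direct equation with control $w$ and zero initial datum, and $p_\gamma(T)=y(w,0)(0)=0$, the computation yields $\langle \beta(y_\gamma-\bar z)-\sigma_\gamma,y(w,0)\rangle_{L^2(Q)}=\int_0^T\!\!\int_{\mathcal O}p_\gamma w\,dx\,dt$. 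The Euler--Lagrange identity thus becomes $\langle p_\gamma+\mu v_\gamma,w\rangle_{\mathcal U}=0$ for every $w\in\mathcal U$, whence \eqref{adjoint_state_eq_part2}. Gathering the four PDEs for $y_\gamma,\xi_\gamma,\sigma_\gamma,p_\gamma$ with their boundary and time conditions gives \eqref{sos_part2}, and uniqueness of the quadruplet follows from the already-established uniqueness of $v_\gamma$ together with the well-posedness of each linear non-local parabolic equation in the system. The main obstacle is the correct treatment of the third term: one must guess the forward equation and, crucially, the initial condition $\sigma_\gamma(0)=-\frac{\beta^2}{\gamma}\xi_\gamma(0)$ that makes the time boundary term reproduce exactly $\frac{\beta^2}{\gamma}\langle\xi_\gamma(0),\xi(w,0)(0)\rangle$, while keeping careful track of the $G'$/$G$ identification so that the pairing at $t=0$ remains meaningful.
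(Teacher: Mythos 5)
Your proposal is correct and follows essentially the same route as the paper: introduce the forward auxiliary state $\sigma_\gamma$ with initial datum $-\frac{\beta^2}{\gamma}\xi_\gamma(0)$, use a duality pairing with $\xi(w,0)$ to convert the $G'$-term of \eqref{euler_lagrange_part2} into $-\langle\sigma_\gamma,y(w,0)\rangle_{L^2(Q)}$, and then absorb the resulting source $\beta(y_\gamma-\bar z)-\sigma_\gamma$ into the backward adjoint $p_\gamma$ to conclude $p_\gamma+\mu v_\gamma=0$. In fact your sign and constant bookkeeping is more consistent than the paper's, whose proof writes $-\frac{\beta}{\gamma}\xi_\gamma(0)$ for the initial datum of $\sigma_\gamma$ and states the intermediate duality identity without the minus sign, both of which are evidently typos.
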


\begin{proof}
We introduce $\sigma_\gamma=\sigma(v_\gamma,0)$ unique solution to the problem
\begin{equation}\label{direct_pb_sigma}
	\begin{cases}
		\displaystyle\frac{\partial \sigma_\gamma}{\partial t} -\Delta \sigma_\gamma+\int_\Omega K(x,\theta,t)\,\sigma_\gamma(\theta,t)\,d\theta = 0, & (x,t)\in Q,
		\\
		\sigma_\gamma = 0, & (x,t)\in \Sigma,
		\\
		\sigma_\gamma(0,x) = -\displaystyle \frac{\beta}{\gamma}\xi_\gamma(0), & x\in\Omega.
	\end{cases}
\end{equation}
Then we multiply (\ref{direct_pb_sigma}) by $\xi(w,0)$ and we integrate by parts over $Q$, thus obtaining
\begin{align*}
	0&= \int_Q\left(-\frac{\partial \xi}{\partial t}-\Delta \xi+ \int_\Omega K(\theta,x,t)\,\xi(\theta,t)\,d\theta\right) \sigma_\gamma\,dxdt - \int_\Omega \xi(w,0)(0)\,\sigma_\gamma(0)\,dx
	\\
	&=\int_Q y(w,0)\,\sigma_\gamma\, dxdt - \int_\Omega \xi(w,0)(0)\,\sigma_\gamma(0)\,dx.
\end{align*}
Hence, 
\begin{align*}
	\left\langle \frac{\beta^2}{\gamma}\xi_\gamma(0),\xi(w,0)(0)
	\right\rangle_{G'} = 	 \langle\sigma_\gamma,y(w,0) \rangle_{L^2(Q)}.
\end{align*}
Then (\ref{euler_lagrange_part2}) reduces to
\begin{equation}\label{simplification_euler_part2}
	\langle {\beta}(y_\gamma-\bar{z})-\sigma_\gamma, y(w,0)\rangle_{L^2(Q)} + \mu\langle v_\gamma,w\rangle_{\mathcal{U}} \geq 0, \quad \mbox{ for all } w \in\mathcal{U}.
\end{equation}
Finally, we define the adjoint state $p_\gamma:=(x,t,v_\gamma)$ as the unique solution of:
\begin{equation}\label{adjoint_pb_p_gamma}
	\begin{cases}
		\displaystyle -\frac{\partial p_\gamma}{\partial t}-\Delta p_\gamma+\int_\Omega K(\theta,x,t)\,p_\gamma(\theta,t)\,d\theta = \beta(y_\gamma-\bar{z})-\sigma_\gamma, & (x,t)\in Q,
		\\
		p_\gamma = 0,  & (x,t)\in \Sigma,
		\\
		p_\gamma(x,T) = 0, & x\in\Omega.
	\end{cases}
\end{equation}
We multiply (\ref{adjoint_pb_p_gamma}) by $y(w,0)$ and we integrate by parts over $Q$ to obtain
\begin{align*}
	\displaystyle\int_Q ({\beta}(y_\gamma-\bar{z})-\sigma_\gamma)y(w,0)\,dxdt =\displaystyle\int_0^T\!\!\int_{\mathcal{O}} p_\gamma\,w\, dxdt.
\end{align*}
Then (\ref{simplification_euler_part2}) reads $\langle\, p_\gamma+\mu v_\gamma,\, w\rangle_{\mathcal{U}} = 0$, for all $w\in \mathcal{U}$,
since $\mathcal{U}$ is a vector space.
\end{proof}

\section{Numerical experiments}\label{sec:5}

In this section, we provide some numerical simulations and comment on the practical implementation of the optimal control and the low-regret control problems for non-local parabolic systems. In what follows, for all $a<b$ we shall use the notation $\inter{a,b}=[a,b]\cap \mathbb N$. 

\subsection{Discretization of the non-local model and numerical implementation of the optimal control problem}

Let $\Omega=(0,1)$ and consider the 1-d model given by

\begin{align}\label{direct_pb_y_simp}
	\begin{cases}
		\dis\frac{\partial y}{\partial t}- \frac{\partial^2 y}{\partial x^2}+ \int_{0}^{1} K(x,\theta)\,y(\theta,t)\,d\theta = v{\bf 1}_{\mathcal{O}}, & (x,t)\in Q,
		\\
		y = 0, &(x,t)\in \Sigma,
		\\
		y(0,x) = y_0(x), & x\in\Omega.
	\end{cases}
\end{align}  
where $K\in L^\infty(\Omega\times\Omega)$. For convenience, we shall consider the case when $K$ can be written in separated variables, i.e., $K(x,\theta)=K_1(x)K_2(\theta)$.

For the numerical tests, system \eqref{direct_pb_y_simp} (and its adjoint) are discretized in time by using a standard implicit Euler scheme and discretized in space by a finite-difference scheme adapted to the non-local term. For given $N,M\in\mathbb N^*$, we set $\delta t=T/M$ and $h=1/(N+1)$ and consider the following discretization for the space and time variables
\begin{align*}
	&0=x_0<x_1<\ldots< x_{N}<x_{N+1}=1, 
	\\
	&0=t_0<t_1<\ldots< T_{M}=T,
\end{align*}
where $x_i=i h$, $i\in\inter{0,N+1}$, and $t_n=n\delta t$, $n\in\inter{0,M}$. The numerical approximation of a function $f=f(x,t)$ at a grid point $(x_i,t_n)$ will be denoted as $f_i^n:=f(t_n,x_i)$ and, for fixed $n$, we write 
\begin{align*}
	f^n=\left(\begin{array}{ccc}f_1^n,  \dots, f_N^n\end{array}\right)^\top
\end{align*}
for the evaluation at the interior points. With this notation, we shall consider fully discrete systems of the form
\begin{equation}\label{eq:discr_nonlocal}
	\begin{cases}
		\displaystyle \frac{y^{n+1}-y^n}{\delta t}+\mathcal A_{h} \, y^{n+1}=\mathcal B_h v^{n+1}, & n \in \inter{0, M-1}, 
		\\ 
		y_0^{n+1}=y_{N+1}^{n+1}=0, & n\in\inter{0,M-1},	
		\\	
		y^0=y^0_h,
	\end{cases}
\end{equation}
where $y=(y^n)_{n\in\inter{0,M}}$ and $v=(v^n)_{n\in\inter{1,M}}$ are the (discrete) state and control variables, $y^{0}_{h}\in \mathbb R^{N}$ is the projection of the given initial datum $y(\cdot,0)$ on the space-mesh, $\mathcal A_{h}\in \mathbb R^{N\times N}$ is a suitable approximation of the differential and non-local operators, and $\mathcal B_{h}\in\mathbb R^{N\times N}$ stands for the corresponding approximation of the control operator. 
We will compute the matrix $\mathcal A_h\in \RR^{N\times N}$ as the sum of two parts, more precisely, $\mathcal A_h=\mathcal A_{h,D}+\mathcal A_{h,NL}$, the first one taking into account the second order differential operator and the second one the integral kernel. 

Using a standard finite-difference method, we construct the matrix $\mathcal A_{h,D}\in\mathbb R^{N\times N}$ as the usual tridiagonal matrix coming from the discretization of the Laplacian operator $-\partial^2_{xx}$ with homogeneous Dirichlet boundary conditions, that is, 
\begin{align*}
	(\mathcal A_h y)_{i}=-\frac{1}{h^2}(y_{i+1}-2y_{i} +y_{i-1}), \quad i=\inter{1,N}. 	
\end{align*}

To incorporate the effects of the integral kernel, we see that for a grid point $(x_j,t_n)$ with $j\in\inter{1,N}$ and $n\in\inter{1,M}$,
\begin{equation}\label{eq:kernel_discrete}
	\int_{0}^{1}K(x_j,\theta)y(\theta,t_n)d\theta \approx K_{1,j} \sum_{i=1}^{N} h K_{2,i} \, y_{i}^{n},
\end{equation}
and thus, writing in vector form, the action of the non-local term can be incorporated by means of the $N\times N$ matrix given by
\begin{equation*}
	\mathcal A_{h,NL}= h \left(\begin{array}{cccc}K_{1,1} &  &  &  \\  & K_{1,2} &  &  \\  &  & \ddots &  \\  &  &  & K_{1,N} \end{array}\right)\left(\begin{array}{cccc} K_{2,1} & K_{2,2} &\dots & K_{2,N} \end{array}\right)\otimes \left(\begin{array}{c}1 \\ 1 \\ \vdots \\ 1\end{array}\right) 
\end{equation*}
where $\otimes$ stands for the usual Kronecker product in $\RR^N$. Obviously, there are many other ways to discretize the integral \eqref{eq:kernel_discrete} but this is enough for our purposes.  Lastly, a natural choice for the control operator $\mathcal B_h$ is the $N\times N$ diagonal matrix with entries 
\begin{equation*}
	(\mathcal B_h)_{i,i}= \begin{cases}
		1 &\text{if } x_i\in\omega, \\ 0 &\text{if } x_i\notin\omega. 
\end{cases}
\end{equation*}
For the implementation of the optimal control problem, we consider the discrete functional
\begin{equation}\label{eq:discr_func}
	J_{h,\delta t}(v)= \beta \norm{y-\bar{z}}{L^2_{\delta t}(0,T;\RR^N)}^2+\norm{\mathcal B_h v}{L^2_{\delta t}(0,T;\RR^N)}^2, \quad \mathcal \beta>0,
\end{equation}
where $y=(y^n)_{n\in\inter{1,M}}$ is the solution to \eqref{eq:discr_nonlocal} and $\bar z=(\bar z^n)_{n\in\inter{1,M}}$ is a discretization of the target $\bar z\in L^2(\Omega\times(0,T))$. Without loss of generality, we have assumed that $\mu = 1$. In \eqref{eq:discr_func}, $L^2_{\delta t}(0,T;\RR^N)$ denotes the discretization of the space $L^2(\Omega\times(0,T))$, more precisely,
\begin{equation*}
	L^2_{\delta t}(0,T;\mathbb R^N):=\Big\{f=(f^n)_{n\in\inter{1,M}}, \ f^n\in\mathbb R^N, \ n\in\inter{1,M}\Big\},
\end{equation*}
endowed with the norm
\begin{align*} 
	\norm{f}{L^2_{\delta t}(0,T;\RR^N)}:= \left(\sum_{n=1}^{M}\delta t |f^n|^2\right)^{1/2},
\end{align*}
where $|\cdot|$ stands for the usual Euclidean norm in $\RR^N$. The natural associated inner product will be defined as 
\begin{align*}
	(f,g)_{L^2_{\delta t}(0,T;\mathbb R^N)}:= \sum_{n=1}^{M}\delta t (f^n,g^n),
\end{align*}
where $(\cdot,\cdot)$ is the usual dot product in $\RR^N$. For shortness, sometimes we simply write $\|\cdot\|_{L^2}$ instead of $\|\cdot\|_{L^2_{\delta t}(0,T;\RR^N)}$.

Once we have written \eqref{eq:discr_func} in such form, the obtention of an optimal control is quite standard. For completeness, we sketch it briefly. A straightforward computation yields that
\begin{equation}\label{eq:grad_J_h_delta}
	\nabla J_{h,\delta t}(v)=2 \beta S_{h,\delta t}^\star(S_{h,\delta t} v - \bar{w}) +2  \mathcal B_{h}^* \mathcal B_h v .
\end{equation}
In \eqref{eq:grad_J_h_delta}, $B_h^*$ denotes the matrix transpose of $B_h$, the operator $S_{h,\delta t}$ is defined as
\begin{align*}
	S_{h,\delta t}: L^2_{\delta t}(0,T;\mathbb R^N) &\to L^2_{\delta t}(0,T;\mathbb R^N) , \quad S_{h,\delta t}v:=y,
\end{align*}
where $y$ is the solution to the forward system 
\begin{equation}\label{eq:forward_GD}
	\begin{cases}
		\displaystyle \frac{y^{n+1}-y^n}{\delta t}+\mathcal A_{h} \, y^{n+1}=\mathcal B_h v^{n+1}, & n \in \inter{0, M-1}, 
		\\
		y^0=0,
	\end{cases}
\end{equation}
while the adjoint operator $S_{h,\delta t}^\star$ is defined as
\begin{align*}
	S_{h,\delta t}^\star: L^2_{\delta t}(0,T;\mathbb R^N) &\to L^2_{\delta t}(0,T;\mathbb R^N), \quad S_{h,\delta t}^\star z := p,
\end{align*}
where $p$ can be found from the solution to the backward system

\begin{equation}\label{eq:backward_GD}
	\begin{cases}
		\displaystyle \frac{p^{n}-p^{n+1}}{\delta t}+\mathcal A_h^* p^{n}= z^n, \quad n\in\inter{1,M}, 
		\\
		^{M+1}=0,
	\end{cases}
\end{equation}
and, finally, $\bar{w}:= \bar{z}-\mathring{y}$ where $\mathring{y}$ denotes the solution to \eqref{eq:discr_nonlocal} with $v\equiv 0$. In \eqref{eq:forward_GD}-\eqref{eq:backward_GD} (and similar formulas below), we shall omit the boundary conditions since we always consider homogeneous Dirichlet ones.

In this way, the optimal control $v=(v^n)_{n\in\inter{1,M}}$ can be readily found by solving the linear problem
\begin{equation}\label{eq:opt_control}
	(\beta S_{h,\delta t}^\star S_{h,\delta t}+ B_h^* B_{h})v=\beta S_{h,\delta t}^\star \bar{w}.
\end{equation}
where $S_{h,\delta t}^\star S_{h,\delta t} v$ corresponds to the evaluation of the cascade forward-backward system \eqref{eq:forward_GD}-\eqref{eq:backward_GD}. 

\subsubsection{Numerical results for the optimal control problem}

Let us set $T=1$ and consider the following parameters of the system
\begin{align}\label{eq:kernel_num}
	y_0(x)=2\sin(\pi x), \quad K_1(x)=\sin(5\pi x), \quad K_2(\theta)=20\times \mathbf{1}_{(0,0.5)}(\theta)\sin(\pi \theta) 
\end{align}

Since nothing changes in our theoretical analysis or the numerical implementation, in the remainder of this document we shall always consider a small diffusion parameter $\nu=0.1$ multiplying the Laplace operator. In Figure \ref{fig:soly_libre}, we plot the free solution of system \eqref{direct_pb_y_simp}, that is, the solution with $v\equiv 0$. We clearly observe the effect of the non-local kernel $K(x,\theta)$ in the resolution of the equation. Here, we have used $M=100$ and $N=60$ for the number of points in the discrete grid. 

\begin{figure}[ht!]
\centering
\includegraphics{./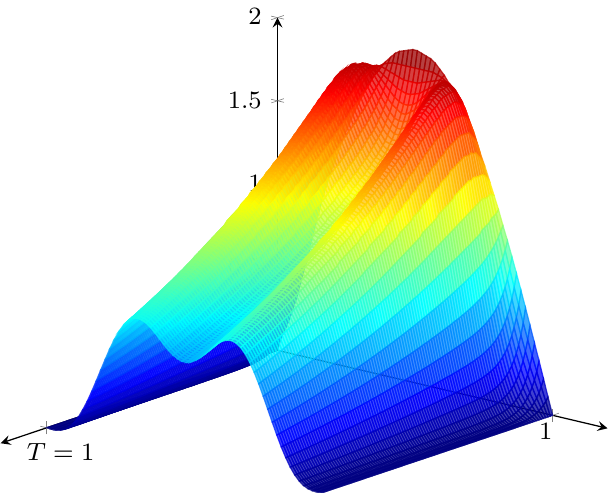}
\caption{Evolution in time of the uncontrolled solution.}\label{fig:soly_libre}
\end{figure}

Now, let us consider $\mathcal O=(0.2,0.8)$ and set the time-independent target function $ \bar{z}(x)=\sin(2\pi x)$. In Figure \ref{fig:soly_control_beta}, we plot the solution using the control obtained by solving the linear problem \eqref{eq:opt_control} for different values of $\mathcal \beta$. To solve this problem, we have used a standard Gradient Descent method. As usual in this type of problems, we observe that by increasing the parameter $\beta$, the approximation to the target $\bar{z}$ (i.e., $\|y(v)-\bar{z}\|_{L^2}$) is overall better, but the control cost increases. 

\begin{figure}[ht!]
\centering
\subfloat[$\beta=10$. Cost of control $\|B_h v\|_{L^2_{\delta t}(0,T;\mathbb R^{N})}=1.57137$]
{
\includegraphics{./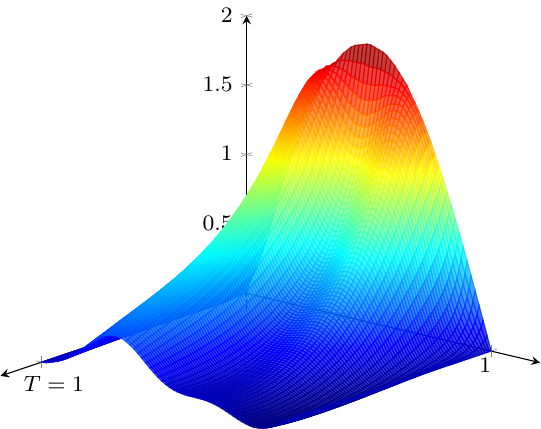}
}\qquad 
\subfloat[$\beta=10^2$. Cost of control $\|B_h v\|_{L^2_{\delta t}(0,T;\mathbb R^{N})}=4.19168 $]
{
\includegraphics{./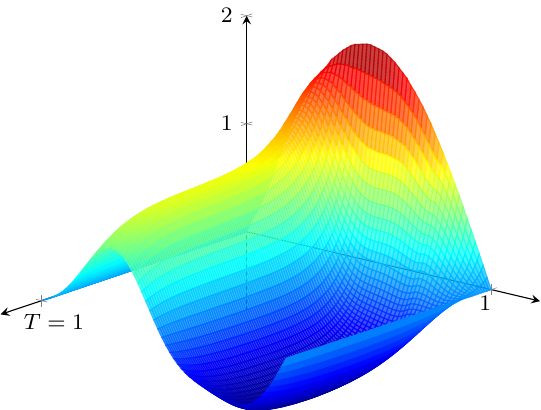}
} \\
\subfloat[$\beta=10^3$. Cost of control $\|B_h v\|_{L^2_{\delta t}(0,T;\mathbb R^{N})}=7.41933 $]
{
\includegraphics{./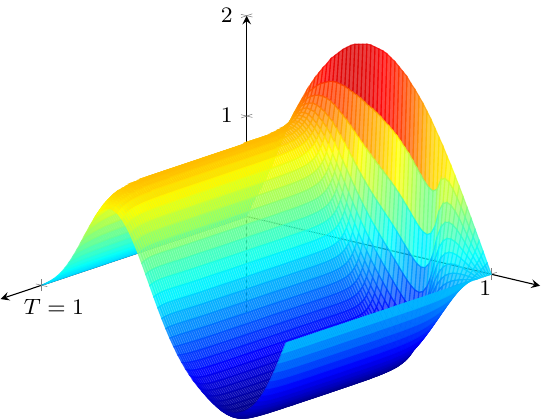}
}
\caption{Evolution in time of the controlled solution for different values of the penalization parameter $\beta$.}
\label{fig:soly_control_beta}
\end{figure}

As for the uncontrolled solution displayed in Figure \ref{fig:soly_libre}, we can clearly observe the effects of the non-local kernel. Indeed, even though the initial condition $y_0$, the target $\bar z$ and the control region $\mathcal O$ are somehow symmetric with respect to the point $x=0.5$, the distance of the solution $y$ to the target $z$ is not symmetric with respect to the point $x=0.5$ as it can be seen in Figure \ref{fig:steady_state}.

\begin{figure}[ht!]
\centering
\subfloat[Non-local]{
\begin{tikzpicture}[scale=0.75]

	\begin{axis}[xlabel={$x$},xmin=0,xmax=1,legend pos=outer north east,
	legend plot pos=left,
	legend style={cells={anchor=west},draw=none}]
	    
	\pgfplotstableread{figures_final/target_60_07-Jun-2021_13h11.org}\nonlocal    
	  
	\addplot[very thick, solid, red] table[x=x,y=yT2] \nonlocal;   
	\addplot[very thick,dashed] table[x=x,y=yT1] \nonlocal; 

	\end{axis}
	\end{tikzpicture}\label{fig:bad_data1}
}
\subfloat[Local]{
\begin{tikzpicture}[scale=0.75]

	\begin{axis}[xlabel={$x$},xmin=0,xmax=1,legend pos=outer north east,
	legend plot pos=left,
	legend style={cells={anchor=west},draw=none}]
    
	\pgfplotstableread{figures_final/target_60_07-Jun-2021_13h17.org}\local    
	  
	\addplot[very thick, solid, red] table[x=x,y=yT2] \local;   	
	\addplot[very thick,dotted] table[x=x,y=yT1] \local; \label{local-st}

	\end{axis}
\end{tikzpicture}\label{fig:bad_data2}
}
\caption{Comparison of the non-local steady state (dashed) and the local steady state (dotted) against the target function $\bar z$ (red solid).}
\label{fig:steady_state}
\end{figure}
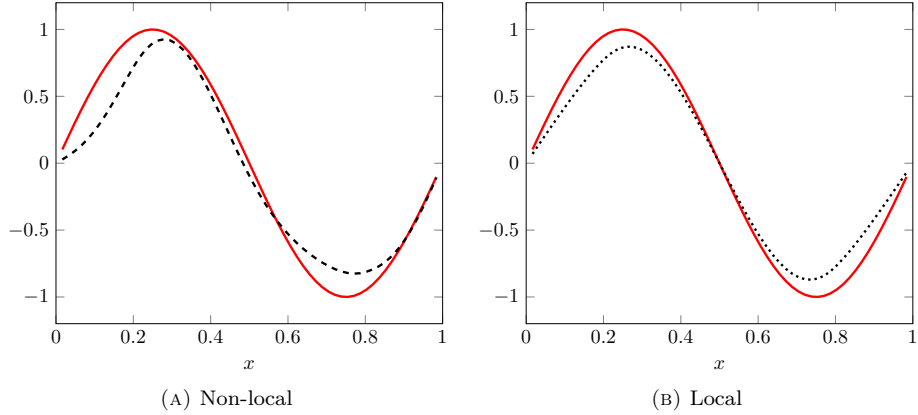

\subsection{Numerical implementation of the low-regret problem}

We turn our attention to the low regret problem. As introduced in Section \ref{sec:4} this problem aims to solve an optimal control problem in the case when the initial data is missing or incomplete. As proposed in the classical work by J.-L. Lions, we can address this problem by replacing the classical optimal control functional \eqref{min_J_part1} by the $\min$-$\max$ problem \eqref{min_J_part2}. Although it is feasible to solve \eqref{min_J_part2} (see e.g. \cite{DP18} and the reference within), here we will use problem \eqref{infgamma}, which transforms the original problem into a minimization one with two state variables. To this end, let us consider the fully-discrete version of the functional $J^\gamma(v)$ (see eq. \eqref{infgamma}) given by
\begin{align}\label{eq:func_discr_low}
	&J^\gamma_{h,\delta t}(v)= \beta \|y(v,0)-\bar z\|_{L^2_{\delta t}(0,T;\mathbb R^N)}^2+\|\mathcal B_h v\|^2_{L^2_{\delta t}(0,T;\mathbb R^N)}- \beta\|\bar z\|^2_{L^2_{\delta t}(0,T;\mathbb R^N)}+\frac{\beta^2}{\gamma}|\xi^{1}|^2 \notag 
	\\
	&v\in L^2_{\delta t}(0,T;\mathbb R^N)
\end{align}
where $y(v,0)=\left(y^n(v,0)\right)_{n\in\inter{1,M}}$ denotes the solution to \eqref{eq:discr_nonlocal} with control $v$ and initial datum $y^0\equiv 0$, $\bar z=(\bar z^n)_{n\in\inter{1,M}}$ is the discretization of the target $\bar z\in L^2(\Omega\times(0,T))$, and $\xi^1$ can be found from the backward equation
\begin{equation}\label{eq:backward_xi}
	\begin{cases}
		\displaystyle \frac{\xi^{n}-\xi^{n+1}}{\delta t}+\mathcal A_h^* \xi^{n}= y^n(v,0), \quad n\in\inter{1,M}, 
		\\
		\xi^{M+1}=0.
	\end{cases}
\end{equation}
A straightforward computation gives
\begin{align}\label{eq:deriv_J_gamma_discr}
	&\displaystyle \left(\nabla J^\gamma_{h,\delta t}(v),\hat{v}\right)= 2\beta \left(y(v,0)-\bar{z},y(\hat{v},0)\right)_{L^2_{\delta t}(0,T;\mathbb R^{N})}+2\left(\mathcal B_h v,\mathcal B_h \hat{v}\right)_{L^2_{\delta t}(0,T;\mathbb R^{N})}+2\beta\left(\sqrt{\tfrac{\beta}{\gamma}}\xi^1,\sqrt{\tfrac{\beta}{\gamma}}\hat{\xi}^1\right), \notag 
	\\ 
	&\mbox{ for all } \hat{v}\in L^2_{\delta t}(0,T;\RR^N),
\end{align}
where $\hat{\xi}^1$ comes from the sequence $\hat{\xi}=(\hat{\xi}^n)_{n\in\inter{1,M}}$, i.e., the solution to \eqref{eq:backward_xi} with right-hand side $y^n(\hat v,0)$.

Our goal is now to find a suitable expression for the gradient $\nabla J_{h,\delta t}^\gamma(v)$, since it will be used later in a gradient descent algorithm. We begin by defining the space $X_{h,\delta t}:= \mathbb R^N\times L^2_{\delta t}(0,T;\mathbb R^N)$, endowed with the canonical inner product $\langle\cdot,\cdot \rangle_{X_{h,\delta t}}:= (\cdot,\cdot)+(\cdot,\cdot)_{L^2_{\delta t}(0,T;\RR^N)}$. Consider the linear operator 
\begin{equation*}
	R_{h,\delta t}: L^2_{\delta t}(0,T;\mathbb R^N)\to X_{h,\delta t}, \quad R_{h,\delta t}v:=\left(\sqrt{\tfrac{\beta}{\gamma}}\xi^1,y\right)
\end{equation*}
where the pair $(\xi^1,y)$ can be found by solving the forward-backward system

\begin{equation}\label{eq:for-back_low}
	\begin{cases}
		\displaystyle \frac{y^{n+1}-y^n}{\delta t}+\mathcal A_{h} \, y^{n+1}=\mathcal B_h v^{n+1}, & n \in \inter{0, M-1}, 
		\\
		\displaystyle \frac{\xi^{n}-\xi^{n+1}}{\delta t}+\mathcal A_h^* \xi^{n}= y^n, & n\in\inter{1,M}, 
		\\
		y^0=0, \quad \xi^{M+1}=0
	\end{cases}
\end{equation}

We observe that system \eqref{eq:for-back_low} is in cascade form: given $v\in L^2_{\delta t}(0,T;\mathbb R^N)$, we can solve first for $y=(y^n)_{n\in\inter{1,M}}$ forward in time and then use this information to solve for $\xi=(\xi^n)_{n\in\inter{1,M}}$ backwardly to recover the datum $\xi^1$. 

Now, let us compute the adjoint operator $R_{h,\delta t}^\star$. We introduce the following system: for given $\sigma_0\in\mathbb R^N$ and $f \in L^2_{\delta t}(0,T;\mathbb R^N)$ we set
\begin{align}\label{eq:back-for_low}
	\begin{cases}
		\displaystyle \frac{\sigma^{n+1}-\sigma^n}{\delta t}+\mathcal A_{h} \, \sigma^{n+1}=0, & n \in \inter{0, M-1}, 
		\\
		\displaystyle \frac{p^{n}-p^{n+1}}{\delta t}+\mathcal A_h^* p^{n}= f^n-\sigma^n, & n\in\inter{1,M}, 
		\\
		\sigma^0=\displaystyle -\tfrac{\beta}{\gamma}\sigma_0, \quad p^{M+1}=0
	\end{cases}
\end{align}

As before, we note that \eqref{eq:back-for_low} is in cascade form, namely, given $\sigma_0 \in \mathbb R^N$ we can solve for $\sigma=(\sigma^n)_{n\in\inter{1,M}}$ and with this we can solve for $p=(p^n)_{n\in\inter{1,M}}$ for any given $f\in L^2_{\delta t}(0,T;\mathbb R^N)$.

Multiplying the first equation of \eqref{eq:for-back_low} by $p^{n+1}$ and summing over $n$, we have by a direct computation that
\begin{equation}\label{eq:duality_1}
	\left(y,f-\sigma \right)_{L^2_{\delta t}(0,T;\mathbb R^N)}=(v,p)_{L^2_{\delta t}(0,T;\RR^N)}
\end{equation}
where we have used that $y$ and $p$ have zero initial and final datum, respectively. Analogously, multiplying the first equation of \eqref{eq:back-for_low} by $\xi^{n+1}$, summing over $n$ and using the initial and final conditions, we get
\begin{equation}\label{eq:duality_2}
	(\sigma,y)_{L^2_{\delta t}(0,T;\RR^N)}=-\left(\tfrac{\beta}{\gamma}\sigma_0,\xi^1\right).
\end{equation}
Combining \eqref{eq:duality_1} and \eqref{eq:duality_2} and recalling the definition of the operator $R_{h,\delta t}$, we have
\begin{equation*}
	\left\langle R_{h,\delta t} v, \left(\sqrt{\tfrac{\beta}{\gamma}}\sigma_0,f\right) \right\rangle_{X_{h,\delta t}}=\left\langle \left(\sqrt{\tfrac{\beta}{\gamma}}\xi^1,y\right),\left(\sqrt{\tfrac{\beta}{\gamma}}\sigma_0,f\right) \right\rangle_{X_{h,\delta t}}=(v,p)_{L^2_{\delta t}(0,T;\RR^N)}
\end{equation*}
Thus, $R_{h,\delta t}^\star: X_{h,\delta t}\to L^2_{\delta t}(0,T;\RR^N)$ is defined as 
\begin{align*}
	R^\star_{h,\delta t}(\sqrt{\tfrac{\beta}{\gamma}}\sigma_0,f):= p,
\end{align*}
where $p=(p^n)_{n\in\inter{1,M}}$ can be found from the solution to the forward-backward system \eqref{eq:back-for_low}. By linearity and using the above definitions, we can then rewrite \eqref{eq:deriv_J_gamma_discr} as
\begin{align*}
	\left(\nabla J^\gamma_{h,\delta t}(v),\hat{v}\right)= 2\beta\left\langle R_{h,\delta t} v, R_{h,\delta t}\hat v \right\rangle_{X_{h,\delta t}}+ 2\left(\mathcal B_h v,\mathcal B_h \hat{v}\right)_{L^2_{\delta t}(0,T;\mathbb R^{N})}-2\beta(\bar{z},S_{h,\delta t}\hat v)_{L^2_{\delta t}(0,T;\mathbb R^{N})}
\end{align*}
whence $\nabla J^\gamma_{h,\delta t}(v)= 2 \beta R^\star_{h,\delta t} R_{h,\delta t} v+2\mathcal B_h^*\mathcal B_h v-2\beta S_{h,\delta t}^\star \bar z$, and thus the optimal control we are looking for can be computed by solving the linear problem
\begin{equation}\label{eq:lin_regret}
	\left(\beta R^\star_{h,\delta t} R_{h,\delta t}+\mathcal B_h^*\mathcal B_h\right) v=\beta S_{h,\delta t}^\star \bar z.
\end{equation}

Observe that the structure of the problem is the same as in \eqref{eq:opt_control}, but this time we have to evaluate the operator $R_{h,\delta t}^\star R_{h,\delta t}$ which amounts to solve four equations instead of two.

\subsubsection{Numerical results for the low-regret optimal control problem}

We fix $T=1$ and consider the kernel functions $K_i$, $i=1,2$ given in \eqref{eq:kernel_num}. As before, we consider the control set $\mathcal O=(0.2,0.8)$ and the time-independent target function $\bar z(x)=\sin(2\pi x)$. As for the optimal control problem, we will solve the linear problem \eqref{eq:lin_regret} by using a standard gradient descent method. 

In Figure \ref{fig:soly_control_regret}, we show the evolution in time of the state variable $y$ and the adjoint state $\xi$ controlled with the low-regret control $v^\gamma$ coming from the minimization of the functional \eqref{eq:func_discr_low}. In this case, we have tuned the parameters to $\beta=100$ and $\gamma=1$. We observe in Figure \ref{fig:soly_control_regret}a, the evolution of the state variable $y$ starting from 0 and transitioning into a steady state which seems to be similar to the steady state shown in Figure \ref{fig:soly_control_beta}a. In Figure \ref{fig:soly_control_regret}b, we observe the backward evolution of the adjoint variable $\xi$ starting from the zero data at time $T=1$.
%
%
%
%
%

\begin{figure}[ht!]
\centering
\subfloat[$\beta=10$. Cost of control $\|B_h v\|_{L^2_{\delta t}(0,T;\mathbb R^{N})}=1.57137 $]
{
\includegraphics{./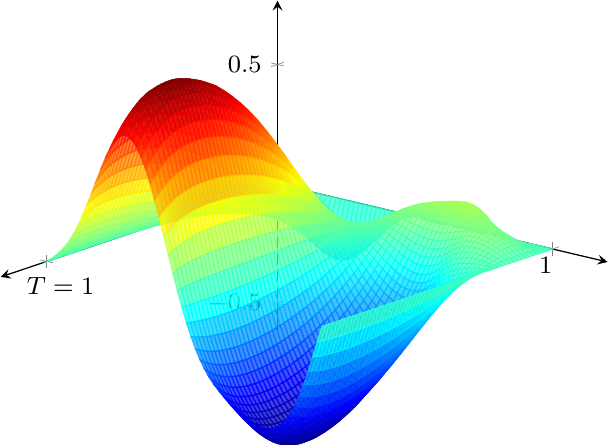}
} \qquad 
\subfloat[$\beta=10^2$. Cost of control $\|B_h v\|_{L^2_{\delta t}(0,T;\mathbb R^{N})}=4.19168 $]
{
\includegraphics{./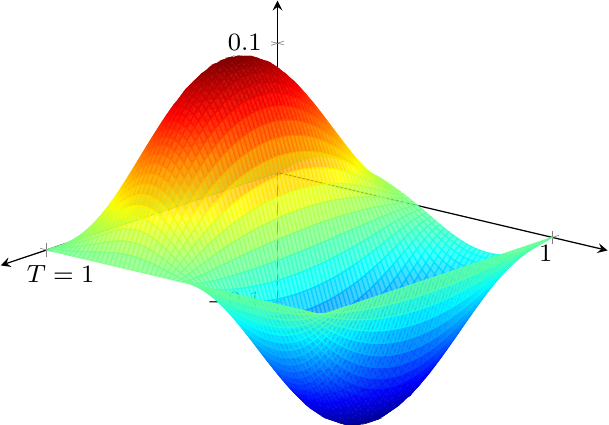}
}
\caption{Evolution in time of the controlled solution $y$ and the adjoint state $\xi$ with low-regret control with parameters $\beta=10$ and $\gamma=0.1$.}
\label{fig:soly_control_regret}
\end{figure}

In Table \ref{tab:low_reg_table}, we have collected data for the low-regret control problem with different values of $\beta$ and $\gamma$. From there, we can conclude that as in the optimal control problem, the higher the value of $\beta$ is, the better approximation to the target function $\bar{z}$ is.  We also see that the parameter $\gamma$ influences greatly the behavior of the control problem in the sense that for given $\beta>0$, lower values of $\gamma$ translate into a smaller $L^2$-norms of the control, affecting the overall approximation of the target $\bar z$. 

\begin{table}
\centering
\renewcommand{\arraystretch}{1.35}
\begin{tabular}{ |c||c|c|c|  }
\hline
\multicolumn{4}{|c|}{$\beta=1$} \\
\hline
$\gamma$ & $J_{h,\delta t}^\gamma(v)$ & $\|B_h v\|_{L^2_{\delta t}(0,T;\RR^N)}$ & $\|y(v,0)-\bar z\|_{L^2_{\delta t}(0,T;\RR^N)}$ \\
\hline
10 & -0.0132321 & 0.11325 & 0.688434 \\ 1 & -0.0132209 & 0.113154 & 0.68845 \\ 0.1 &-0.0131106 & 0.112213 & 0.688605 \\ 0.01 & -0.0121515 & 0.10422 & 0.689959 \\
 \hline 
  \multicolumn{4}{|c|}{$\beta=10$} \\
  \hline
10   & -1.03067    & 0.882627  &   0.564147  \\
 1 &   -0.968289  & 0.829583   &0.572797 \\
 0.1 & -0.682832   & 0.631448   & 0.61298   \\
  0.01 &-0.43599   & 0.558128  & 0.648367   \\
  \hline
    \multicolumn{4}{|c|}{$\beta=10^2$} \\
  \hline
   10   & -27.1986    & 2.37894  &   0.360065  \\
  1 &  -17.9241 & 2.10709 & 0.500576  \\
  0.1 &-14.9387  &  2.15546 & 0.546241   \\
 \hline
\end{tabular}
\caption{Optimal energy $J^\gamma(v)$, cost of control $\|B_h v\|_{L^2}$ and distance to the target $\|y(v,0)-\bar z\|_{L^2}$ for the low-regret control problem with different values of $\gamma$ and $\beta$}
\label{tab:low_reg_table}
\end{table}

We conclude the discussion by showing that the low-regret control can be used for controlling equations in the case of missing data. To this end, we take the simulation parameters $\beta=100$, $\gamma=1$, $T=1$, and the kernels $K_1$ and $K_2$ of \eqref{eq:kernel_num}. Using our computational tool, we can compute the low-regret control $v^\gamma$ and use it for controlling system \eqref{eq:discr_nonlocal} with different initial conditions. In Table \ref{tab:comp_1}, we have collected some information about the performance of the low-regret control $v^\gamma$ against the \textit{real} optimal control $v^{opt}$ (computed with $\beta=100$) and the uncontrolled case for different initial data. 

We can see that the low-regret control $v^\gamma$ does not make things worse as compared to the uncontrolled case, which is consistent with the goal of this strategy, but it is not as good as the performance achieved with the optimal control computed with the full knowledge of the initial datum. Nonetheless, the low-regret control has the advantage that is has to be computed only once and then it can be used to control the system for a wide variety of initial data. We shall remark that there is still room for improvement as shown in Table \ref{tab:comp_2}. There, we have increased to $\gamma=10$ and computed the corresponding low regret control to obtain a lower optimal energy and smaller distance of the target (as compared to the last two columns of Table \ref{tab:comp_1}). Nevertheless, how to choose effectively this parameter depends largely on the application and the experiment performed.

\begin{table}
\centering
\renewcommand{\arraystretch}{1.35}
\begin{tabular} {|c|c|c|c|c|c|c|}
\cline{2-7} 
\multicolumn{1}{c|}{}  & \multicolumn{2}{c|}{Uncontrolled} & \multicolumn{2}{c|}{Optimal control} & \multicolumn{2}{c|}{Low-regret control} \\
\hline
Initial datum & $J(0)$ & $\|y(0)-\bar z\|_{L^2}$ & $J(v^{opt})$ & $\|y(v^{opt})-\bar z\|_{L^2}$ & $J(v^\gamma)$ & $\|y(v^{\gamma})-\bar z\|_{L^2}$ \\
\hline 
$\sin^{10}(\pi x)$ & 27.62901 & 0.74336 & 8.82497 & 0.28957 & 17.43680 & 0.55176  \\
3 & 191.09119 & 1.95495 & 45.17162 & 0.70582 & 180.79790 & 1.88988 \\
$\mathbf 1_{(0.5,0.8)}(x)-\mathbf 1_{(0.2,0.5)}(x)$ & 37.50553 & 0.86609 & 13.85573 & 0.38048 & 26.43313 & 0.69597  \\
$\sin(\tfrac{1}{3}\pi x)+0.3\cos(\tfrac{15}{4}\pi x)$ & 32.88063 &0.81093 &10.83359 &0.32834 &22.34038 &0.63444 \\
\hline
\end{tabular}
\caption{Comparison of the optimal energy $J(\cdot)$ and the distance to the target for the controlled solution $\|y(\cdot)-\bar{z}\|_{L^2}$ for system \eqref{eq:discr_nonlocal} controlled with different control functions and initial conditions.}
\label{tab:comp_1}
\end{table}

\begin{table}
\centering
\renewcommand{\arraystretch}{1.35}
\begin{tabular} {|c|c|c|}
\hline
Initial datum & $J(v^\gamma)$ & $\|y(v^{\gamma})-\bar z\|_{L^2}$ \\
\hline 
$\sin^{10}(\pi x)$ &11.92807 &0.42667  \\
3  &175.15713 &1.85651  \\
$\mathbf 1_{(0.5,0.8)}(x)-\mathbf 1_{(0.2,0.5)}(x)$  &17.98742 &0.55067   \\
$\sin(\tfrac{1}{3}\pi x)+0.3\cos(\tfrac{15}{4}\pi x)$ &15.81261 &0.50965  \\
\hline
\end{tabular}

\caption{Effect of the parameter $\gamma$ on the controlled solution with low-regret control.}
\label{tab:comp_2}
\end{table}

\bibliographystyle{siam}
\bibliography{bib_opt_nonlocal}

\end{document}